\newcommand{\ds}{\displaystyle}
\newcommand{\reals}{\mathbb{R}}
\newcommand{\realstwo}{\mathbb{R}^2}
\newcommand{\realsthree}{\mathbb{R}^3}
\newcommand{\xb}{{\bf{x}}}
\newcommand{\Dx}{{\partial_x}}
\newcommand{\pd}{{\partial}}
\newcommand{\cA}{{\mathscr{A}}}
\newcommand{\cT}{{\mathscr{T}}}
\newcommand{\Dn}{\partial_{\nu}}
\newcommand{\Dz}{{\partial_z}}
\newcommand{\cE}{{\mathcal{E}}}
\newcommand{\cD}{\mathscr{D}}
\newcommand{\bA}{\mathbb{A}}
\newcommand{\bB}{\mathbb{B}}
\newcommand{\cH}{\mathscr{H}}
\newcommand{\bP}{\mathbb{P}}
\newcommand{\bX}{\mathbb{X}}
\newcommand{\cF}{\mathscr{F}}
\newcommand{\R}{\mathbb{R}}
\newcommand{\cU}{\mathcal{U}}
\newcommand{\Om}{{\Omega}}
\newcommand{\ga}{{\gamma}}
\newcommand{\la}{{\lambda}}
\theoremstyle{plain}
\newtheorem{theorem}{Theorem}[section]
\newtheorem{lemma}[theorem]{Lemma}
\newtheorem{proposition}[theorem]{Proposition}
\newtheorem{corollary}[theorem]{Corollary}
\newtheorem{condition}{Condition}[section]
\theoremstyle{remark}
\newtheorem{remark}{Remark}[section]
\numberwithin{equation}{section}
\numberwithin{theorem}{section}
\numberwithin{remark}{section}
\title{Nonlinear Plates Interacting with A Subsonic, Inviscid Flow  via  Kutta-Joukowski Interface  Conditions}
\date{\today}
 \author{\normalsize \begin{tabular}[t]{c@{\extracolsep{.8em}}c}
            Irena Lasiecka & Justin T. Webster \\
 \it University of Memphis    &\it Oregon State University \\
 \it Memphis, TN &\it Corvallis, OR\\
  \it lasiecka@memphis.edu &  \it websterj@math.oregonstate.edu
\end{tabular}}
\begin{document}
\maketitle

\begin{abstract} {\noindent We analyze the well-posedness of a flow-plate interaction considered in \cite{dowell,dowellnon}. Specifically, we consider the {\em Kutta-Joukowski} boundary conditions for the flow \cite{K1,K2,K4}, which ultimately give rise to a hyperbolic equation in the half-space (for the flow) with {\em mixed boundary conditions}. This boundary condition has been considered previously in the lower-dimensional interactions \cite{bal0,bal4}, and dramatically changes the properties of the flow-plate interaction and requisite analytical techniques.

 We present results on well-posedness of the fluid-structure interaction with the Kutta-Joukowsky flow conditions in force. The semigroup approach to the proof utilizes an abstract setup related to that in \cite{supersonic} but requires (1) the use of a Neumann-flow map to address a Zaremba type elliptic problem and (2) a trace regularity assumption on the acceleration potential of the flow. This assumption is linked to invertibility of singular integral operators which are analogous to the finite Hilbert transform in two dimensions. (We show the validity of this assumption when the model is reduced to a two dimensional flow interacting with a one dimensional structure; this requires microlocal techniques.) Our results link the analysis in \cite{supersonic} to that in \cite{bal0,bal4}.
\vskip.1cm
\noindent Key terms: flow-structure interaction, nonlinear plate,
 nonlinear semigroups, well-posedness, mixed boundary conditions, Possio integral equation, finite Hilbert transform. }
\end{abstract}

\section{Introduction}
\subsection{Motivation}
We aim to study the oscillations of a thin flexible plate interacting with an inviscid potential flow in which it is immersed. In the literature, many models have been suggested to accommodate various configurations and physical parameters. In this treatment we are concerned with analyzing the effect (from an infinite dimensional point of view) of the so called {\em Kutta-Joukowsky} (K-J) flow conditions in a flow-plate model of great recent interest. The K-J condition is stated in \cite{bal0,bal4} as taking ``a zero pressure jump off the wing and at the trailing edge"; in line with the analyses in \cite{bal0,bal4}, we take this to correspond to taking the {\em acceleration potential} of the flow to be zero outside the plate, in the plane of the plate. In this analysis we take {\em clamped plate boundary conditions} in order to focus on the abstract problems associated to the PDE analysis of the K-J condition. In fact, preliminary investigations indicate that free plate boundary conditions may better accommodate the K-J flow conditions---this is in line with certain engineering applications (e.g.  flag type models \cite{K1,K2}). However there are many technical challenges associated to PDE models of flow-plate interactions when the plate boundary conditions are not of homogeneous type. 

More specifically,  we address a flow-structure PDE model which describes the interactive dynamics between a plate and a surrounding potential flow (see, e.g., \cite{bolotin,dowellnon,dowell,dowell1} and the references therein). The novel feature of our analysis is the implementation of the K-J flow condition in the model considered in \cite{springer,webster,supersonic}. In the aforementioned analyses, a more straightforward (Neumann type) flow boundary condition is taken in the plane of the plate, in line with a standard {\em panel} configuration. To extend the analysis to more general (physical) configurations, well-posedness of the model must be established with the K-J flow condition of recent interest \cite{bal0,bal4} (see Section \ref{physicals} for more discussion). 
Our goals in the treatment are therefore (1) to make precise the K-J boundary condition in the three dimensional model found in \cite{dowellnon,springer} and provide a well-posedness result, and, (2) to relate two existing mathematical analyses of flow-plate models found in \cite{webster,jadea12,supersonic} and \cite{bal0,bal4}.

For simplicity in exposition, we first consider a linear plate in our arguments. Nonlinearity is then included, as it required for accuracy in modeling; however, it is nonessential to demonstrate the principal mechanisms at play with respect to the K-J condition. The considerations \cite{webster,jadea12,supersonic,springer} address the nonlinear aspects of the model in great detail. We address the nonlinear nature of the plate, and provide a discussion of the critical properties of the nonlinear model in Section \ref{physicals}.

Lastly, we note that these results were first reported in \cite{proceed}, without a complete proof. 
\subsection{Notation}
For the remainder of the text we write $\xb$ for $(x,y,z) \in \realsthree_+$ or $(x,y) \in \Omega \subset \realstwo_{\{(x,y)\}}$, as dictated by context. Norms $||\cdot||$ are taken to be $L_2(D)$ for the domain dictated by context. Inner products in $L_2(\realsthree_+)$ are written $(\cdot,\cdot)$, while inner products in $L_2(\R^2\equiv\pd\R^3_+)$ are written $<\cdot,\cdot>$. Also, $ H^s(D)$ will denote the Sobolev space of order $s$, defined on a domain $D$, and $H^s_0(D)$ denotes the closure of $C_0^{\infty}(D)$ in the $H^s(D)$ norm
which we denote by $\|\cdot\|_{H^s(D)}$ or $\|\cdot\|_{s,D}$. We make use of the standard notation for the trace of functions defined on $\realsthree_+$, i.e. for $\phi \in H^1(\realsthree_+)$, $\gamma[\phi]=\phi \big|_{z=0}$ is the trace of $\phi$ on the plane $\{\xb:z=0\}$.

\subsection{Model}
The model in consideration describes the interaction between a nonlinear plate with a field or flow of gas above it. (By antisymmetry, we need only consider the gas flow one side of the plate.) To describe the behavior of the gas we make use of the theory of potential flows (see, e.g., \cite{bolotin,dowell} and the references therein) which produce a perturbed wave equation for the flow velocity potential. The oscillatory behavior of the plate is governed by the second order (in time) Kirchoff plate equation. The principal point of interest in this treatment is the linear theory. In Section \ref{physicals} we briefly mention certain `physical' nonlinearities which are used in the modeling of the large oscillations of thin, flexible plates---so called \textit{large deflection theory}.

The environment we consider is $\realsthree_+=\{(x,y,z): z \ge 0\}$. The plate
 is modeled by  a bounded domain $\Omega \subset \reals^2_{\{(x,y)\}}=\{(x,y,z): z = 0\}$ with smooth boundary $\partial \Omega = \Gamma$.
 We take the plate to be clamped on all edges, immersed in an inviscid flow (over body) with velocity $U <1$ in the $x$-direction. Here we normalize $U=1$ to be Mach 1, i.e. $0 \le U <1$ corresponds to subsonic flows. 
 \begin{remark}
 Often in modeling panels, clamped or hinged boundary conditions are considered \cite{lagnese}. See the treatments \cite{webster&lasiecka,ACC,springer} for more details  on these plate boundary conditions and the corresponding coupling with the flow. 
 \end{remark}
 
\subsubsection{Plate Equation}
We analyze a general nonlinear plate equation, taken without rotational inertia. The coupling with the flow takes place in the external pressure term acting on the plate via the {\em acceleration potential} of the flow. 

The scalar function $u: \Om\times \reals_+ \to \reals$ represents the transverse displacement of the plate in the $z$-direction at the point $(x;y)$ at the moment $t$. We take the Kirchoff type plate with clamped boundary conditions:

\begin{equation}\label{plate}\begin{cases}
u_{tt}+\Delta^2u= p(\xb,t) & \text { in } \Om\times (0,T),\\
u(0)=u_0;~~u_t(0)=u_1,\\
u=\Dn u = 0 & \text{ on } \pd\Om\times (0,T).
\end{cases}
\end{equation}
The aerodynamical pressure  $p(\xb,t)$  represents the coupling with the flow
and will be given below.
\subsubsection{Flow Equation}
For the flow we make use of linearized
 potential theory, and \cite{BA62,bolotin,dowell1} the (perturbed) flow potential $\phi:\realsthree_+ \rightarrow \reals$ must satisfy the perturbed wave equation below:
\begin{equation}\label{flow}\begin{cases}
(\partial_t+U\partial_x)^2\phi=\Delta \phi & \text { in } \realsthree_+ \times (0,T),\\
\phi(0)=\phi_0;~~\phi_t(0)=\phi_1,\\
\Dz \phi = d(\xb,t)& \text{ on } \Omega \times (0,T)
\end{cases}
\end{equation}
Here, without  loss of generality, the density of the perturbed airflow is assumed to be equal to 1.  
We will be utilizing the {\em acceleration potential} $\psi \equiv \phi_t+U\phi_x$ as a state variable in what follows. 
\subsubsection{Coupling}
The strong coupling in the model takes place in the downwash term of the flow potential by taking $$d(\xb,t)=\big[(\partial_t+U\partial_x)u (\xb)\big]$$ for $\xb \in \Omega$. On $\partial \realsthree_+ \backslash \Omega$ we implement the so called {\em Kutta-Joukowski continuity condition} \cite{bal0,bal4,K1,K2,K4}: \begin{equation}\label{KJcondition}
\gamma[\phi_t+U\phi_x]=\gamma[\psi]=0, ~~\xb \in \realstwo \backslash \Omega.
\end{equation}
\begin{remark}
We distinguish this flow boundary condition from that which we refer to as the {\em standard flow} boundary condition, which is of full Neumann type:
The standard flow condition on the boundary for $\phi$ is \begin{equation}\label{standard}
\Dz \phi \big|_{z=0} = \begin{cases} (\partial_t+U\partial_x)u(t,\xb) & \xb \in \Omega\\ 0 & \xb \in \realstwo \backslash \Omega \end{cases} 
\end{equation} This boundary condition is presented in \cite{dowellnon,dowell1,dowell} and studied extensively in the subsonic case in \cite{webster,webster&lasiecka,jadea12,springer} and in the supersonic case in \cite{supersonic}.
\end{remark}
 The aerodynamical pressure on the surface of the plate is
of the form
\begin{equation}\label{aero-dyn-pr}
p(\xb,t)=\gamma[\psi]
\end{equation}
 in (\ref{plate}) above. This gives the fully coupled model:
\begin{equation}\label{flowplate}\begin{cases}
u_{tt}+\Delta^2u+f(u)= \gamma[\psi] & \text { in } \Om\times (0,T),\\
u(0)=u_0;~~u_t(0)=u_1,\\
u=\Dn u=0 & \text{ on } \pd\Om\times (0,T),\\
(\partial_t+U\partial_x)^2\phi=\Delta \phi & \text { in } \realsthree_+ \times (0,T),\\
\phi(0)=\phi_0;~~\phi_t(0)=\phi_1,\\
\Dz \phi = (\partial_t+U\partial_x)u & \text{ on } \Omega   \times (0,T).\\
(\partial_t+U\partial_x)  \phi = 0 & \text{ on } \realstwo \backslash \Omega \times (0,T)
\end{cases}
\end{equation}
\subsection{Energies and State Space}
In previous analyses for the {\em subsonic case}, without the K-J conditions in place, energies can be derived by applying standard plate and flow multipliers ($u_t$ and $\phi_t$, resp.) along with the standard boundary conditions above to obtain the energy relation for the system. This energy is well-behaved and bounded from below \cite{webster,jadea12}. 

In the case of supersonic flows, or in the case of subsonic flows {\em with} the K-J condition in place, this procedure leads to an energy which is unbounded (from below) for the flow component of the model. Hence, in line with the analysis of the supersonic flows (with standard flow boundary conditions) in \cite{supersonic}, we make use of the flow acceleration multiplier $(\partial_t+U\partial_x)\phi \equiv \psi$ for the {\em subsonic flow}, taken with the {\em K-J} flow boundary conditions. 
Thus for the flow dynamics, instead of $(\phi;\phi_t)$ we seek invariance for  $(\phi;\psi)$.

This leads to the following (formal) energies, arrived at via Green's Theorem when the multipliers $u_t$ and $\psi$ are applied to \eqref{flowplate}:
\begin{align}\label{energies}
E_{pl}(t) = \dfrac{1}{2}\big(||u_t||^2+||\Delta u||^2\big),~~
E_{fl}(t) =  \dfrac{1}{2}\big(||\psi||^2+||\nabla \phi||^2\big),
~~\cE(t) =  E_{pl}(t)+E_{fl}(t),
\end{align}
These energies provide the formal energy relation for the system (implementing the K-J conditions)
\begin{equation}\label{energyrelation}
\cE(t)+ U\int_0^t <u_x,\gamma[\psi]>_{\Omega} dt = \cE(0). \end{equation}
This energy relation provides motivations for viewing the dynamics as the sum of a generating piece and a `perturbation'.
\par
Finite energy constraints manifest themselves in the natural requirements on the functions $\phi$ and $u$:
\begin{equation}\label{flowreq}
\phi(\xb,t) \in C(0,T; H^1(\realsthree_+))\cap C^1(0,T;L_2(\realsthree_+)), \end{equation} \begin{equation}\label{platereq}
u(\xb,t) \in C(0,T; H_0^2(\Omega))\cap C^1(0,T;L_2(\Omega)).\end{equation}
The above finite energy constraints lead to the {\em finite energy space}: \begin{equation}\label{space-Y}
Y = Y_{fl} \times Y_{pl} \equiv \big(H^1(\realsthree_+)\times L_2(\realsthree_+)\big) \times \big(H_0^2(\Omega) \times L_2(\Omega)\big).
\end{equation}

We see above that the primary source of mathematical difficulty lies in interpreting and unravelling the energy relation in \eqref{energyrelation}.  In fact, this representation of the energies in (\ref{energies}) provides a good topological measure for the potential solution; however the energy balance is {\it lost} in (\ref{energyrelation}) when making use of the state variable $\psi$ and, in addition, the boundary term involves the traces of $ L_2$ solutions of the flow, which are possibly {\it not defined}  at all.
In view of these complications,  our approach is be based on  (1) developing a suitable   theory for the traces of the flow solutions (as in \cite{supersonic}); (2)  counteracting  the loss of energy balance relation. 
  \subsection{Definition of Solutions}\label{solutions}
  In the discussion below, we will encounter strong (classical), generalized (mild), and weak (variational) solutions.
In our analysis we will be making use of semigroup theory, hence we will work with \textit{generalized} solutions; these are strong limits of strong solutions. These solutions  satisfy an integral formulation of (\ref{flowplate}), and are called \textit{mild} by some authors. Topologically, generalized solution are finite energy solutions (i.e. $\cE(t) < \infty $ ) and are elements of the space $Y$. That is to say that they satisfy the regularity properties in (\ref{flowreq}), (\ref{platereq}). 
These solutions are obtained as strong limits of solutions which originate in the domain of the generator given  by (\ref{dom-bA-n}). 
Generalized solutions correspond to semigroup solutions for an initial datum outside of the domain of the generator. See \cite[Section 6.5.5]{springer} and \cite{webster} for more detailed discussion of solutions. In fact, in the context of linear problems (and the nonlinear models addressed in Section \ref{physicals}) generalized solutions are in fact {\em weak solutions}, i.e.,
they satisfy the corresponding variational forms
(see Definition 6.4.3 in \cite[Chapter 6]{springer}). 
\section{Statement of Results and Mathematical Context}
\subsection{Flow-Plate Analysis}
The original flow-plate interaction (for the clamped plate) was discussed numerically in \cite{bolotin,dowell,dowellnon}. We note that in these original considerations (in the panel configuration) the standard flow boundary condition was taken.  In the monograph \cite{springer}, the authors consider the flow-plate model abstractly in the case of subsonic flows, and show well-posedness. Later, the subsonic case was considered \cite{webster} and cast into a semigroup framework. Finally, in \cite{jadea12} a thorough treatment is given (again in the subsonic case) utilizing a viscosity method and obtaining hidden regularity of associated flow traces. In all of these cases, the standard boundary conditions and subsonic nature of the flow provide a {\em good} energy relation. However, the flow energy itself becomes degenerate for supersonic flows in this configuration. This degeneracy necessitates a ``change of state variable" ($\phi_t \to \psi$) in order to recover the validity of the flow energy; however, this comes at the cost of polluting the {\em energy relation}. Correspondingly, the well-posedness of the flow-plate system for supersonic flows was an open question until recently \cite{supersonic}. 
In this manuscript, the authors take the approach of recovering the dynamics (after the change of state variable) by viewing them as the sum of a generating component and a perturbation, where there perturbation corresponds to the ``energy polluting" term in the energy relation. Ultimately, to handle this perturbation, a suitable trace theory must be developed for the flow term $\gamma[\phi_t]$, which is a priori undefined ($\phi_t \in L_2(\realstwo)$). 

The key insight into the present analysis occurs at the level of the energy relation. For flow models which call for the K-J boundary condition, we may again utilize the state variable $\psi$ (rather than $\phi_t$); doing so, we arrive at the same energy relation in the case of supersonic flows (as discussed in the preceding paragraph). This indicates that the abstract approach (i.e. the decomposition of the dynamics) taken in the supersonic case  with standard boundary conditions will in fact accommodate the K-J boundary conditions, if a suitable trace theory can be developed for the flow. 

We pause here to mention another approach to the study of a similar flow-plate model; the author of \cite{bal0,bal4} considers a linear {\em wing}  immersed in a subsonic flow; the wing is taken to have a high aspect ratio thereby allowing for the suppression of the span variable, and reducing the analysis to individual chords normal to the span. By reducing the problem to a one dimensional analysis, many technical hangups are avoided, and Fourier-Laplace analysis is greatly simplified. Ultimately, the problem of well-posedness and $L_p$ regularity of solutions can be realized in the context of the classical {\em Possio integral problem} \cite{bal0,bal4}, involving the inverse Hilbert transform and analysis of Mikhlin multipliers \cite{mikhlin}. In our approach, we attempt to characterize our solution by similar means and point out how the two dimensional analysis greatly complicates matters and gives rise to singular integrals in higher dimensions. We also mention the confluence of our approach and the papers mentioned above in Remark \ref{Bal1}. 

\subsection{Statement of Main Results}
The main result of this paper provides existence, uniqueness and continuous dependence on the data  of finite energy solution. This result is obtained under a technical trace regularity condition imposed on aeroelastic potential, and stated in Condition \ref{le:FTR0}. In the case when the flow domain is two dimensional, the validity of this condition is proved herein (see also \cite{bal0}). We do not discuss this condition in detail here to avoid clouding the exposition.
Additionally, as discussed above, we focus on the linear theory in the arguments below. However, our analysis (as in \cite{supersonic,webster,jadea12,springer}) applies to the case of nonlinear plates. We have provided a description of the pertinent nonlinearities and critical properties in Section \ref{physicals}. 

The final result reads as follows: 
\begin{theorem}
With reference to the model (\ref{flowplate}), with  $0 \leq U < 1$:
\begin{enumerate}
\item Assuming $f: H_0^2(\Omega) \to L_2(\Omega)$ is {\em locally Lipschitz},
 \item
  Assuming the trace regularity Condition \ref{le:FTR0} holds for the aeroelastic potential $\psi = (\phi_t+U\phi_x)$,
  \end{enumerate}
  then there exists a unique finite  energy solution that is local in time. 
  
  This is to say, there exists $ T_0 > 0 $ 
  such that $(\phi, \phi_t, u, u_t ) \in C(0, T_0; Y ) $  for all initial data  $(\phi_0, \phi_1, u_0, u_1 ) \in  Y $. 
  This solution depend continuously on the initial data. \\
  If in addition, we take $f(u)$ to be the von Karman nonlinearity, the Berger nonlinearity, or Kirchoff type nonlinearity (Section 6.1 (1), (2), (3), resp.), then the solution above is global in time. In other words, the nonlinear dynamical system generates a continuous semigroup $ S_t$ on the space $Y$. 
\end{theorem}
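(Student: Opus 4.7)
My plan is to recast the coupled system \eqref{flowplate} as an abstract first-order Cauchy problem on the finite energy space $Y$ via the change of state variable $\phi_t \mapsto \psi = \phi_t+U\phi_x$, and then to invoke nonlinear semigroup theory. The state vector is $Y \ni (\phi,\psi,u,u_t)$ and the linearized flow equation becomes the first-order system $\phi_t = \psi - U\phi_x$, $\psi_t = \Delta \phi - U\psi_x$, coupled to the plate through $p = \gamma[\psi]$ on $\Omega$ together with the downwash $\phi_z = u_t+Uu_x$ on $\Omega$ and the K-J condition $\gamma[\psi]=0$ on $\realstwo\setminus\Omega$. The K-J/downwash pair is a mixed (Zaremba-type) boundary condition for $\phi$, and I would treat it by introducing a Neumann-flow lifting that sends the plate velocity data to a harmonic-type extension into $\realsthree_+$, thereby shifting the inhomogeneous Neumann datum to the interior and reducing the problem to one with homogeneous mixed conditions; this is the analogue of the lifting used in \cite{supersonic}, but now targeted at a Zaremba problem.

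First I would establish generation of a $C_0$-semigroup for a carefully chosen linear generator $\bA$ on $Y$. The choice is guided by the energy relation \eqref{energyrelation}: I would put everything that integrates by parts to an identity into $\bA$, and put the residual boundary coupling term $U\langle u_x,\gamma[\psi]\rangle$ into a perturbation $\bP$. Generation for $\bA$ should follow from Lumer--Phillips after a suitable scalar shift, once $m$-dissipativity is verified on the domain encoding the mixed boundary conditions; the main functional-analytic issue here is the range condition $(\lambda I - \bA)D(\bA) = Y$, which reduces to solvability of the associated stationary Zaremba problem for $\phi$ (Dirichlet-type on $\realstwo\setminus\Omega$ for $\psi$, Neumann-type on $\Omega$ for $\phi_z$), handled via the Neumann-flow map from the previous paragraph.

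Next I would add back the perturbation $\bP$ encoding the ``energy-polluting'' boundary term. Because $\gamma[\psi]$ is \emph{a priori} undefined for $L_2$-flow solutions, $\bP$ is unbounded on $Y$; the trace regularity Condition \ref{le:FTR0} is precisely what is needed to interpret $\gamma[\psi]$ as an element of a negative-order Sobolev space on $\Omega$ with a bound controlled by $\|(\phi,\psi,u,u_t)\|_Y$, so that $\bP$ becomes (after integration in time) a bounded perturbation of the variation-of-parameters formula. This yields a well-posed integral equation on $C(0,T_0;Y)$ for the linear dynamics via a contraction argument on a short interval. I expect this to be the technically hardest step: closing the fixed point requires that the trace assumption be compatible with the $\psi$-regularity produced by the generator $\bA$, and matching these two function spaces is where the microlocal/Zaremba structure genuinely enters.

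Once linear well-posedness is in hand, I would include the plate nonlinearity $f$: since $f: H^2_0(\Omega)\to L_2(\Omega)$ is locally Lipschitz, it is a locally Lipschitz perturbation on $Y$, and standard nonlinear semigroup theory (e.g.\ Segal's theorem) upgrades the linear solution to a unique local-in-time generalized solution depending continuously on the initial data, establishing part (1)--(2) of the theorem. For the globality statement under the von Karman, Berger, or Kirchhoff nonlinearities, I would derive an \emph{a priori} energy bound: for each such $f$ there is a nonnegative potential energy $\Pi(u)$ such that $f(u) = \Pi'(u)$, $\Pi$ is bounded below on $H^2_0(\Omega)$ by a coercive expression, and $\Pi$ is locally Lipschitz on bounded sets of $H^2_0(\Omega)$. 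Combining this with the energy identity \eqref{energyrelation} and a Gronwall argument (using the same trace estimate as above to control $\int_0^t\langle u_x,\gamma[\psi]\rangle$) prevents finite-time blowup of $\cE(t)+\Pi(u(t))$, so the local solution extends globally and the solution map defines a continuous semigroup $S_t$ on $Y$.
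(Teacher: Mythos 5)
Your proposal takes essentially the same route as the paper: the change of variable $\phi_t\mapsto\psi$, the split of the dynamics into an $m$-dissipative generator $\bA$ (maximality reduced to a stationary Zaremba problem handled through the Neumann-flow map $N$) plus the unbounded boundary perturbation $\bP$ carrying the residual $U\langle u_x,\gamma[\psi]\rangle$ term, the use of Condition~\ref{le:FTR0} to make sense of $\gamma[\psi]$ in a negative Sobolev space, a variation-of-parameters/fixed-point argument on $C(0,T_0;Y)$, and a locally Lipschitz nonlinear perturbation with the potential bound \eqref{potentialbound} for global extension. The paper additionally invokes the abstract admissibility theorem (Theorem~\ref{dualityequiv}) to convert the trace condition into boundedness of the convolution operator $L$ and uses Ball's theorem to identify the generator $\widehat{\bA}=\bA+\bP$ with its domain, but these are implementation details of the same strategy you outline.
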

Moreover, when we restrict to the lower dimensional case (and consider nonlinearities which are analogous to those listed in Section 6.1 in two dimensions) we have:
\begin{lemma}\label{c:1}
In the case when the dimension of $\Omega$ is one, Condition \ref{le:FTR0} is satisfied. Hence, in that case, any semiflow defined by (\ref{flowplate}) with nonlinear function subject to the hypotheses of Proposition \ref{abstractnonlin} generates a continuous semigroup.
\end{lemma}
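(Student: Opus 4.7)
The plan is to verify Condition~\ref{le:FTR0} in the reduced one-dimensional setting by recognizing the boundary trace problem as an instance of the classical Possio integral equation, and then to invoke the main theorem (together with Proposition~\ref{abstractnonlin}) to pass from trace regularity plus local Lipschitz $f$ to a global continuous semigroup on $Y$.

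First I would perform a Fourier--Laplace transform in $(x,t)$ and exploit the explicit exponential decay in $z$ coming from the subsonic symbol of $(\partial_t+U\partial_x)^2-\Delta$ to reduce the half-plane flow problem with mixed K--J boundary data to a scalar integral equation on the interval $\Omega\subset\mathbb R$. The downwash $d=(\partial_t+U\partial_x)u$ is prescribed on $\Omega$, while the trace $\gamma[\psi]$ vanishes on $\mathbb R\setminus\Omega$; inverting the Dirichlet-to-Neumann symbol on the line yields a relation of the form $\mathcal P\,\gamma[\psi]=d$, where $\mathcal P$ is the Possio operator of \cite{bal0,bal4}. A microlocal computation identifies the principal part of $\mathcal P$ as (a nonzero multiple of) the finite Hilbert transform on $\Omega$, with remainder given by a convolution against a kernel whose symbol is $O(|\xi|^{-1})$ at infinity and smooth in the subsonic regime $U<1$, hence compact on the appropriate scale of weighted $L_p$/Sobolev spaces on $\Omega$.

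Next I would apply the Tricomi--S\"ohngen inversion formula for the finite Hilbert transform on an interval, using the weighted spaces whose endpoint behavior encodes exactly the K--J edge condition (bounded at the trailing edge, inverse-square-root admissible at the leading edge). Combined with compactness of the remainder, the Fredholm alternative reduces invertibility of $\mathcal P$ to injectivity; the latter follows from the energy identity \eqref{energyrelation} applied to the homogeneous flow problem with $d\equiv 0$ and $\gamma[\psi]|_{\mathbb R\setminus\Omega}=0$. Inverting the Laplace transform and using Plancherel in $t$ converts the resulting $L_p$-bound on $\mathcal P^{-1}$ into precisely the trace estimate on $\gamma[\psi]$ required by Condition~\ref{le:FTR0}.

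Once Condition~\ref{le:FTR0} is verified, the conclusion is immediate: the main theorem produces a local-in-time finite-energy solution, and for the von Karman, Berger, and Kirchhoff-type nonlinearities (Section~\ref{physicals}) the standard a priori bounds on $E_{pl}$ preclude finite-time blow-up, so the local semiflow extends to a continuous semigroup $S_t$ on $Y$. The main obstacle is the microlocal step identifying the principal symbol of $\mathcal P$ as the finite Hilbert transform and controlling the remainder uniformly up to the sonic threshold $U\to 1^-$ and up to the endpoints $\partial\Omega$, where the K--J condition forces a specific singularity class; this is precisely the point at which the one-dimensional structure of $\Omega$ is essential, since in two dimensions the analogous symbol is no longer scalar and the Tricomi-type inversion fails.
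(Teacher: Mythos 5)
Your overall strategy --- Fourier--Laplace reduction, identification of a Possio-type equation on the interval, and inversion through the finite Hilbert transform --- is the same one the paper takes. However, there is a genuine gap in how you decompose the Possio symbol, and that gap sits exactly where the $H^{-1/2-\epsilon}$ exponent in Condition~\ref{le:FTR0} comes from.

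You describe the Possio operator $\mathcal P$ as the finite Hilbert transform plus a remainder ``whose symbol is $O(|\xi|^{-1})$ at infinity\ldots hence compact,'' then invert by Fredholm. But the multiplier relating $\widehat d$ and $\widehat{\gamma[\psi]}$ is $m(\tau,\eta)=-\sqrt{D}/(\tau+iU\eta)$ with $D=(\tau+iU\eta)^2+\eta^2$, and the relevant decomposition is \emph{multiplicative}, not additive: the paper writes $m=j(\eta)\,S(\tau,\eta)$ where $j=-i|\eta|/\eta$ is the Hilbert transform symbol and $S$ is a nontrivial auxiliary multiplier. The factor $S$ is \emph{not} a compact perturbation of a constant. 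It is order zero in the elliptic region, but on the characteristic cone $\beta+U\eta\approx 0$ (with $\tau=\alpha+i\beta$) one has $|S|\sim(1+\eta^2)^{-1/4}$, so $S^{-1}$ genuinely loses half a derivative; this is the content of the paper's Lemma~\ref{mikhlin} and Corollary~\ref{Sinv}. That half-derivative loss is precisely what produces the $H^{-1/2-\epsilon}$ regularity in \eqref{trace-reg-est-M0}, and it is invisible in the additive ``FHT + compact'' picture, because for fixed $\tau$ the error $m-c_0 j$ is only small for $|\eta|\gg|\tau|$ and is not uniformly negligible across the characteristic variety when one integrates over $\tau$. As written, your argument cannot explain \emph{why} an $L_p$ bound on $\mathcal P^{-1}$ downgrades to $L_2(0,T;H^{-1/2-\epsilon})$ rather than to a stronger norm; the $\mathscr S$ factor supplies exactly that accounting, and without it the final step of your proof does not close.

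Two secondary deviations are worth noting but are not gaps. First, you argue injectivity of the Possio operator via the energy identity \eqref{energyrelation}; the paper instead cites \cite{bal0}. Your route is reasonable in principle, though it requires verifying that the formal energy balance applies to solutions in the weak class under consideration. Second, you invoke weighted spaces keyed to the leading/trailing-edge singularity structure; the paper dispenses with this by using the classical $L_p(\Omega)$ invertibility of the finite Hilbert transform for $p<2$ (see Theorem~\ref{invhilb} in the Appendix), together with the Sobolev embedding $L_p\hookrightarrow H^{-\epsilon}$, which sidesteps the endpoint bookkeeping entirely. To repair your proof you should (i) carry out the factorization $m=j\cdot S$, (ii) prove a two-sided bound for $S$ of the form $|S|\gtrsim(1+\eta^2)^{-1/4}$ by the case analysis on the hyperbolic region, and (iii) invert in two stages --- the finite-Hilbert part on $L_p$, $p<2$, and then $\mathscr S^{-1}$ with the half-derivative loss --- before concatenating with the main generation theorem and Proposition~\ref{abstractnonlin}.
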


\begin{remark}
The generation of semigroups for an arbitrary three dimensional flow is subjected to the validity of the trace Condition \ref{le:FTR0}.This, in turn, depends on invertibility properties of finite Riesz-type Transforms in two dimensional domains.   While it is believed that this property should be generically true, at the present stage this appears to be an open question in the analysis of singular integrals and depends critically on the geometry of $\Omega$ in two dimensions. 
\end{remark}

\subsection{Approach and Outline of Proof}
We briefly outline our approach:
\begin{enumerate}
\item As motivated by the supersonic analysis in \cite{supersonic}, we decompose the linear dynamics into a dissipative piece $\bA$ (unboxed below) and a perturbation piece $\bP$ (boxed below): \begin{equation}\label{sys1*}\begin{cases}
(\partial_t+U\partial_x)\phi =\psi& \text{ in } \realsthree_+ \times(0,T),
\\
(\partial_t+U\partial_x)\psi=\Delta \phi&\text{ in } \realsthree_+\times (0,T),
\\
\Dz \phi = \partial_tu +\framebox{$U\partial_xu $}& \text{ on }
\Omega \times (0,T),
\\
(\phi_t+U\phi_x) = 0 & \text{ on }
(\realstwo \backslash \Omega) \times (0,T),\\
u_{tt}+\Delta^2u=\gamma[\psi]&\text{ in }  \Om \times (0,T),\\
u=\Dn u = 0 & \text{ on } \pd\Om \times (0,T).\\
\end{cases}
\end{equation} We then proceed to show that $\bA$ (corresponding to the unboxed dynamics above) is $m$-dissipative on the state space.  Dissipativity is natural and built in  within the structure of the problem, while maximality requires analysis of the Zaremba problem (mixed flow boundary conditions).
\item To handle the ``perturbation" of the dynamics, $\bP$ (boxed) we cast the problem into an abstract boundary control \cite{redbook} framework following the analysis in \cite{supersonic}.
In order to achieve this, the critical ingredient in the proof is demonstrating  ``hidden"  boundary regularity for the acceleration potential $\psi$ of the flow. It will be shown that this component is an element of a negative Sobolev space
$L_2(0, T; H^{-1/2-\epsilon} (\Omega))$, based on an assumption about special integral transform related to the finite Hilbert transform.
The above regularity allows us to show that the term $<u_x,\gamma[\psi]>$ is well-defined via duality.
Consequently,  the  problem  with  the  ``perturbation" of the dynamics $\bP$,   can be recast
as an abstract boundary control problem with appropriate continuity properties of the control-to-state maps.

\item Then, to piece the operators together as $\bA+\bP$, we make use of variation of parameters with respect to the generation property of $\bA$ and appropriate dual setting. This yields an integral equation on the state space (interpreted via duality) which must be formally justified in our abstract framework. This is critically dependent upon point (2.) above. We then run a fixed point argument on the appropriate space to achieve a local-time solution for the fully linear Cauchy problem  representing formally  the evolution $y_t=(\bA+\bP)y \in Y$. In order to identify its generator, we apply Ball's theorem \cite{ball} which then yields global solutions.
\end{enumerate}
\section{Abstract Setup}
\subsection{Operators and Spaces}
We
 introduce the standard linear plate operator with clamped boundary conditions:
 $\mathscr{A}=\Delta^2$ with the domain
 \[
 \cD(\mathscr{A})=\{u\in H^4(\Omega): u=\Dn u = 0 \text{ on } \partial \Omega\}=(H^4\cap H_0^2)(\Omega).
   \]
  Additionally, $\mathscr{D}(\mathscr{A}^{1/2})= H_0^2(\Omega)
$. Take our state variable to be $$y\equiv(\phi, \psi; u, v) \in \big( H^1 (\realsthree_+) \times L_2(\realsthree_+)\big)\times\big( \cD(\cA^{1/2})\times L_2(\Omega)\big)\equiv Y.$$ We work with $\psi$ as an independent state variable, i.e., we are not explicitly taking $\psi = \phi_t+U\phi_x$ here. 

To build our abstract model, let us first define the operator $\bA: \cD(\bA) \subset Y \to Y$ by
\begin{equation}\label{op-A}
\bA \begin{pmatrix}
\phi\\
\psi\\ u\\ v
\end{pmatrix}=\begin{pmatrix}-U\partial_x \phi+\psi\\- U\partial_x\psi + \Delta \phi\\ v \\ -\cA u+ \gamma[\psi] \end{pmatrix}
\end{equation}
The domain of $\cD(\bA) $ is given by
\begin{equation}\label{dom-bA}
\cD(\bA) \equiv \left\{ y =  \begin{pmatrix}
\phi\\
\psi\\ u\\ v
\end{pmatrix}\in Y\; \left| \begin{array}{l}
-U \Dx \phi + \psi \in H^1(\R^3_+),~\\ -U \Dx \psi  + \Delta \phi  \in L_2(\R^3_+), \\
\psi =0 \text{ on } \realstwo\backslash\Omega, ~~\Dn \phi = -v , \text{ in} ~ \Omega \\
v \in \cD(\cA^{1/2} )= H_0^2(\Omega),~\\
-\cA u + \gamma  [\psi] \in L_2(\Omega) \end{array} \right. \right\}
\end{equation}
The operator $\bA$ will be the foundation of our abstract setup, and ultimately the dynamics of the evolution in \eqref{flowplate} can be represented through $\bA$. 

As a first step, we show that  $\bA$ is m-dissipative. %We will then consider a (semigroup) perturbation \begin{equation*}
%\bP\begin{pmatrix}\phi\\\psi\\u\\v \end{pmatrix}
%\end{equation*} such that the sum $\bA+\bP$ yields the full dynamics of \eqref{flowplate}.
%The issue here is the unbounded ``perturbation" (described below) will not reside in the state space $Y$. 
%This fact  forces  us to construct a perturbation theory which operates in extended  (dual) spaces.
%This step will rely critically on ``hidden" boundary regularity of the acceleration potential $\psi$
%- established in Section \ref{tracereg}.   As a consequence, we  show that
 %the resulting Cauchy problem $y_t = (\bA + \bP)y, ~y(0)=y_0 \in Y$  yields well-posedness for the full system in \eqref{flowplate}. Application of Ball's theorem~\cite{ball} allows us  to conclude that  $\bA + \bP$, with  an appropriately defined domain, is a generator of a strongly continuous semigroup on $Y$.
 
 \subsection{Semigroup Generation of $\bA$}
\begin{theorem}
The operator $\bA$ is  $m$-dissipative on $Y$.  Hence, via the Lumer-Philips theorem, it generates a C$_0$ semigroup of contractions. %Additionally, the operator $\bA$ is also skew-adjoint. 
\end{theorem}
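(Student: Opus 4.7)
The plan is to apply the Lumer-Phillips theorem on $Y$ equipped with the energy inner product
\[
(y_1,y_2)_Y=(\nabla\phi_1,\nabla\phi_2)+(\psi_1,\psi_2)+(\Delta u_1,\Delta u_2)_\Omega+(v_1,v_2)_\Omega
\]
induced by $\cE$ in \eqref{energies}; the two things to check are dissipativity of $\bA$ on $\cD(\bA)$ and the range condition $\operatorname{Ran}(\lambda I-\bA)=Y$ for some $\lambda>0$. Dissipativity is a direct integration-by-parts computation for $y\in\cD(\bA)$: the transport contributions $-U(\partial_x\nabla\phi,\nabla\phi)$ and $-U(\partial_x\psi,\psi)$ vanish by antisymmetry in $x$; the cross terms yield $(\nabla\psi,\nabla\phi)+(\Delta\phi,\psi)=\langle\partial_\nu\phi,\gamma[\psi]\rangle_{\R^2}$ via Green's identity, which by the domain constraints $\gamma[\psi]=0$ on $\R^2\backslash\Omega$ and $\Dn\phi=-v$ on $\Omega$ collapses to $-\langle v,\gamma[\psi]\rangle_\Omega$; and the plate contribution $(\Delta v,\Delta u)_\Omega+(-\cA u+\gamma[\psi],v)_\Omega$ reduces, using the clamped conditions $u,v\in H_0^2(\Omega)$, to $\langle v,\gamma[\psi]\rangle_\Omega$. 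The two boundary pairings cancel exactly, yielding $(\bA y,y)_Y=0$---so $\bA$ is (in fact conservatively) dissipative.

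For the range condition, given $y^*=(\phi^*,\psi^*,u^*,v^*)\in Y$ I would eliminate $\psi=(\lambda+U\partial_x)\phi-\phi^*$ and $v=\lambda u-u^*$ from $(\lambda I-\bA)y=y^*$, reducing the problem to two coupled subproblems: the plate equation $(\lambda^2+\cA)u=\gamma[\psi]|_\Omega+v^*+\lambda u^*$ on $\Omega$ with clamped conditions, and a Zaremba-type mixed elliptic BVP for $\phi$ on $\R^3_+$,
\begin{align*}
[(\lambda+U\partial_x)^2-\Delta]\phi &= F && \text{in }\R^3_+,\\
\partial_z\phi &= \lambda u-u^* && \text{on }\Omega,\\
(\lambda+U\partial_x)\gamma[\phi] &= \gamma[\phi^*] && \text{on }\R^2\backslash\Omega,
\end{align*}
with $F=\psi^*+(\lambda+U\partial_x)\phi^*$. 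The subsonic condition $0\le U<1$ makes the principal symbol $(1-U^2)\xi_1^2+\xi_2^2+\xi_3^2$ positive definite, so the operator is uniformly elliptic. I would handle the mixed data by lifting the transport datum on the complement via a particular $\phi_0\in H^1(\R^3_+)$ with $(\lambda+U\partial_x)\gamma[\phi_0]=\gamma[\phi^*]$ on $\R^2\backslash\Omega$, and then seeking $\tilde\phi=\phi-\phi_0$ in the closed subspace $W=\{\eta\in H^1(\R^3_+):\gamma[\eta]|_{\R^2\backslash\Omega}=0\}$ (on which the homogeneous transport condition holds trivially). After integration by parts, the antisymmetric $x$-transport term cancels on the diagonal, leaving the bilinear form with coercive symmetric part $\lambda^2\|\phi\|^2+(1-U^2)\|\partial_x\phi\|^2+\|\partial_y\phi\|^2+\|\partial_z\phi\|^2$ on $W$; Lax-Milgram then yields a unique $\tilde\phi\in W$, hence a unique $\phi\in H^1(\R^3_+)$. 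This construction defines the \emph{Neumann-flow map} $N_\lambda:\lambda u-u^*\mapsto\gamma[\psi]|_\Omega\in H^{-1/2}(\Omega)$ announced in the abstract; substituting into the plate equation yields $(\lambda^2+\cA-\lambda N_\lambda)u=\mathrm{RHS}$, invertible on $H_0^2(\Omega)$ for $\lambda$ sufficiently large since $\lambda^2+\cA$ dominates the lower-order perturbation $\lambda N_\lambda$. Tracing back then recovers $v,\phi,\psi$ and verifies $y\in\cD(\bA)$---the crucial constraint $\psi|_{\R^2\backslash\Omega}=0$ being built into the construction.

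The hard part is the Zaremba/mixed-boundary character of the flow BVP: on $\partial\R^3_+$ one has Neumann data on $\Omega$ together with a first-order transport condition on the unbounded complement $\R^2\backslash\Omega$, rather than the pure-Neumann data treated in \cite{webster,jadea12,supersonic}. The delicate technical points in carrying out the above are (a) interpreting $(\lambda+U\partial_x)\gamma[\phi]$ distributionally as an element of $H^{-1/2}(\R^2)$ and making sense of its restriction to the unbounded complement; and (b) constructing the continuous $H^1(\R^3_+)$-lift $\phi_0$, which amounts to solving a first-order transport ODE along horizontal lines through $\R^2\backslash\Omega$ (with decay as $x\to-\infty$) and then extending compatibly across the interface $\partial\Omega$. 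These are precisely the ingredients packaged into the ``Neumann-flow map'' construction alluded to in the introduction, and they are where the geometry of $\Omega$ truly enters the analysis.
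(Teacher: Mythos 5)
Your dissipativity calculation is essentially identical to the paper's---a direct integration by parts in which the $x$-transport terms vanish by antisymmetry, the flow boundary term $\langle\Dn\phi,\gamma[\psi]\rangle_{\R^2}$ collapses to $-\langle v,\gamma[\psi]\rangle_\Omega$ using the K-J condition on the complement, and this cancels the boundary term from the plate coupling.

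For the range condition you take a genuinely different route, and it is worth flagging where the paper's trick does work that your construction must replicate by hand. The paper does \emph{not} solve a BVP for $\phi$ with a first-order tangential (transport) condition on $\R^2\setminus\Omega$. Instead they first set $\lambda=0$ and observe that the constraint $\psi=0$ combined with the resolvent relation $-U\phi_x+\psi=f_1$ gives $U\gamma[\phi_x]=-\gamma[f_1]$ on $\R^2\setminus\Omega$: a \emph{Dirichlet} datum for $\phi_x$, not a transport equation for $\phi$. Differentiating the elliptic system in $x$ then yields a bona fide Zaremba problem for $\tilde\phi=\phi_x$ (Dirichlet on the complement, Neumann $-g_{1,x}$ on $\Omega$), whose $H^{1}$-regularity is quoted from Savar\'e. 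This delivers $\phi_x\in H^1(\R^3_+)$ and hence $\psi\in H^1(\R^3_+)$ directly, so $\gamma[\psi]\in H^{1/2}(\Omega)$; the plate components are then recovered by standard elliptic theory, and the $\lambda>0$ case is handled separately by a priori estimates. Your lift-and-Lax--Milgram scheme keeps $\lambda>0$ throughout and attacks $\phi$ itself, which has two consequences. First, the subspace $W=\{\gamma[\eta]|_{\R^2\setminus\Omega}=0\}$ encodes a Dirichlet condition, which is strictly \emph{stronger} than the homogeneous transport condition $(\lambda+U\partial_x)\gamma[\tilde\phi]=0$: the latter has a nontrivial decaying kernel (e.g.\ $e^{-\lambda x/U}$ on the downstream rays $x>b(y)$), so your construction pins down $\gamma[\phi]=\gamma[\phi_0]$ on the complement, a choice built into the (non-unique) lift. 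This is adequate for surjectivity, since $\cD(\bA)$ constrains $\gamma[\psi]$ and not $\gamma[\phi]$ on the complement, but it should be said explicitly. Second, and more substantively, Lax--Milgram only gives $\phi\in H^1(\R^3_+)$, hence $\gamma[\psi]=(\lambda+U\partial_x)\gamma[\phi]-\gamma[\phi^*]\in H^{-1/2}(\R^2)$---one full Sobolev order weaker than the $H^{1/2}$ the paper obtains. You must then check carefully that $H^{-1/2}$ traces are enough to verify all of the conditions in $\cD(\bA)$, in particular $-\cA u+\gamma[\psi]\in L_2(\Omega)$ and the weak Neumann matching $\Dn\phi=-v$ on $\Omega$; the paper's higher regularity makes these verifications routine. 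Finally, the lift $\phi_0$ is the genuine technical burden you correctly identify: the transport ODE on $\R^2\setminus\Omega$ has a free constant on downstream rays, the matching across $\partial\Omega$ must preserve $H^{1/2}$ trace regularity, and you also need $\Dn\phi_0|_\Omega=0$ (or to absorb it into the Neumann data) for the variational formulation to produce the correct interface condition. None of these steps is obviously wrong, but they constitute real work that the paper's differentiation-in-$x$ reduction avoids entirely.
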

\begin{proof} 
\subsubsection{Dissipativity}
We employ the following inner product on our state space: for $y,~\hat y \in Y$

$$ (( y, \hat{y}))_Y \equiv  ( \nabla \phi , \nabla \hat{\phi} )_{  \realsthree_+}  + (\psi, \hat{\psi})_{  \realsthree_+}   + < \cA^{1/2} u, \cA^{1/2} v >_{\Om}  + < v, \hat{v} >_{\Om} $$

With this inner product, we have for any $y\in \cD(\bA)$:

$$(( \bA y, y))_Y = -U (\nabla \phi_x, \nabla {\phi} )_{  \realsthree_+}  + (\nabla \psi, \nabla \phi )_{  \realsthree_+}  - U (\psi_x , \psi ) - ( \nabla \phi, \nabla \psi)
+ < \Dn \phi , \psi >_{\realstwo} + $$ $$ <\cA^{1/2} u , \cA^{1/2} v >_{\Om} - < \cA u, v >_{\Om} + < \gamma[\psi], v > $$
Implementing the K-J  boundary conditions and the coupling:
$$ < \Dn \phi, \psi>_{\realstwo} = - < v, \gamma[\psi]>_{\Omega} + < \Dn \phi, \psi>_{ \realstwo \backslash  \Omega} $$
yields     $(( \bA y, y))_Y =0 $ for any $y \in \cD(\bA)$. \\

\subsubsection{Maximality}

We must verify that $\mathscr R(\bA + \lambda I)=Y$ for some $\lambda \geq 0 $. 
We will take the system with $\lambda = 0$ first for simplicity.
Given $(f_1,f_2;g_1,g_2) \in Y$, we consider:
\begin{eqnarray}
-U \phi_x + \psi = f_1 \in H^1(  \realsthree_+) \\
 -U \psi_x + \Delta \phi = f_2\in L_2(  \realsthree_+) \\
v = g_1 \in \cD(\cA^{1/2} ) \\
- \cA u + \gamma[\psi] = g_2 \in L_2(\Om),
\end{eqnarray}
with boundary conditions:
\begin{equation}\begin{cases}\Dn \phi = -v = -g_1 \in H^2(\Omega)\\ 
\psi =0 \text{ in }~ \realstwo\backslash\Omega.\end{cases}\end{equation}
The last condition is equivalent to
$$ U \phi_x = - f_1 \in H^{1/2} (\realstwo\setminus \Om),$$ via the trace theorem.

As we are in the subsonic case, $ U < 1$, the above problem is strongly elliptic. Let us now solve for  $\tilde{\phi } = \phi_x $. We differentiate the flow relations above and the first boundary condition in $x$, and combine the flow equations to arrive at the elliptic problem \begin{equation}
\begin{cases}\Delta_U\tilde \phi = f_{2,x}+Uf_{1,x} \in L_2(\realsthree_+)\\
\Dn \tilde\phi =-g_{1,x} \in H^1(\Omega)\\
\tilde \phi=-\frac{1}{U}f_1 \in H^{1/2}(\realstwo\backslash \Omega)\end{cases}
\end{equation}
where $\Delta_U=\Delta-U^2\partial_{x}^2$.

This is a mixed (Zaremba) elliptic problem \cite{savare}. 
We recall,  with $ \Gamma_1 \cup \Gamma_2 = \partial D $ , where $D$ is a bounded and smooth domain,  the solution to 
\begin{equation}\label{zar}\begin{cases}
\Delta w \in L_2(D)  \\
\Dn w \in H^{-1/2 + \theta}(\Gamma_1)  \\
w \in H^{1+ \theta } (\Gamma_2) \end{cases}
\end{equation}
enjoys the regularity $ w \in H^{1+ \theta} (D)$ where $ \theta \in (-1/2, 1/2) $. 
Thus the maximal spatial regularity possess $ 3/2 - \epsilon $ derivatives. 
The above result is almost optimal, since it is known that in general when $\Gamma_0 $ and $\Gamma_1$ 
do not meet  at  an appropriate acute  angle, one has that $ w \notin B^{3/2}_{2, q}(D) $ for all $ q < \infty $; on the other hand 
$B_{2,\infty}^{3/2}(D)  \subset H^{3/2- \epsilon}(D) $ (where $B^s_{p,q}$ is the appropriately defined Besov space on $D$). 

An application of the above result with $\theta =0 $    yields the  solution $$ \tilde{\phi}  = \phi_x \in H^1(  \realsthree_+ ). $$ Returning to the equation, we recover $\psi$: $ \psi = U \phi_x + f_1 \in H^1(  \realsthree_+) $ and hence,
 $\psi \in H^1(  \realsthree_+) $. With $\psi\in H^1(\realsthree_+)$ in hand (and hence $\gamma[\psi] \in H^{1/2}(\Omega)$), solving for $(u,v)$ is standard. In addition, having solved for $\phi_x$, we may then specify that $\Delta \phi = f_2+\psi_x \in L_2(\realsthree_+)$, with appropriate boundary conditions. We must verify that this is valid by recovering $\phi \in H^1(\realsthree_+)$. \begin{remark} Note that (from the regularity of the flow equations and mixed boundary conditions) we {\em will not obtain} $ \phi \in H^2(\realsthree_+)$, demonstrating that the resolvent operator, in this case, is not compact. \end{remark}
 
 To see that $\phi \in H^1 (\realsthree_+) $, we proceed as follows: let $\lambda > 0 $ and consider the equation for $(\bA +\lambda I)y=(f_1,f_2;g_1,g_2) \in Y$, where $y=(\phi,\psi;u,v)$ is a solution (as obtained above for the case $\lambda=0$):
 \begin{eqnarray}
-U \phi_x + \psi  +\lambda \phi = f_1 \in H^1(  \realsthree_+) \\
 -U \psi_x + \Delta \phi  + \lambda \psi = f_2\in L_2(  \realsthree_+) \\
v  + \lambda u = g_1 \in \cD(\cA^{1/2} ) \\
- \cA u + \gamma[\psi] + \lambda v  = g_2 \in L_2(\Om)
\end{eqnarray}
Applying $\nabla $ to both sides of first equation, multiplying by $\nabla \phi $, and integrating gives the relation
$$ (\nabla \psi, \nabla \phi) + \lambda ||\nabla \phi||^2 = (\nabla f_1, \nabla \phi ) $$ 
Multiplying the second equation by $\psi$, and integrating by parts (with the boundary conditions) gives
$$ - (\nabla \phi, \nabla \psi) - < v, \psi>  + \lambda ||\psi ||^2 = (f_2, \psi) .$$
Adding the two equations and bounding yields the following a priori estimate on $(\phi,\psi)$:
$$\lambda ||\nabla \phi||^2 + \lambda ||\psi||^2 \leq C \Big[ ||v||_{1/2, \Omega} ||\psi||_{-1/2, \Omega } + ||f_1||^2_{1,   \realsthree_+} + ||f_2||^2_{0,   \realsthree_+ }\Big] .$$ In addition, we have the standard bound for the plate components $(u,v)$ of the system: $$\lambda ||\cA^{1/2}u||^2+\lambda ||v||^2 \le C\big[||v||_{1/2}||\psi||_{-1/2,\Omega}+||\cA^{1/2}g_1||^2+||g_2||^2 \big] .$$ 
We note that from the equations we recover $v \in \cD(\cA^{1/2})$. Moreover, \begin{align}||v||_{1/2,\Omega}^2 \le &C\big[\lambda^2||u||^2_{1/2,\Omega}+||g_1||^2_{1/2,\Omega}\big] \\ \le&C\big[\lambda^2||\cA^{1/2}u||^2+||g_1||^2_{2,\Omega}\big]. \end{align}  Thus, $$||v||_{1/2}||\psi||_{-1/2,\Omega} \le \epsilon ||v||_{1/2}^2+C_{\epsilon}||\psi||_{-1/2,\Omega}^2$$ for all $\epsilon>0$, and so $$||v||_{1/2}||\psi||_{-1/2} \le \epsilon ||\cA^{1/2}u||^2+C_{\epsilon,\lambda}||g_1||^2_{2,\Omega}+C_{\epsilon}||\psi||_{0,\realsthree_+}^2.$$
Finally, taking $\lambda$ sufficiently large, we have the final a priori estimate on the solution:
$$\lambda ||(\phi,\psi;u,v)||_Y^2 \le C||(f_1,f_2;g_1,g_2)||_Y^2.$$
 \end{proof}
 In addition, we note from the proof of the m-dissipativity of $\bA$ above, that $-\bA$ is also m-dissipative; indeed, $((-\bA y,y))_Y=0$.  The proof of maximality (the corresponding estimates) do not depend on the sign of $\bA$, owing to inherent cancellations and the structure of the static flow problem. Thus, with both $\pm \bA$ m-dissipative, we have \cite[Cor. 2.4.11]{cas-har98}: \begin{corollary}
The operator $\bA$ is skew-adjoint on $Y$ and generates a C$_0$ group of isometries.
 \end{corollary}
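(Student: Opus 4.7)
The plan is to reduce the corollary to an application of a Stone-type theorem, leveraging the work already done to establish m-dissipativity of $\bA$. Concretely, if both $\bA$ and $-\bA$ are m-dissipative on the Hilbert space $Y$, then Cor.~2.4.11 of \cite{cas-har98} (equivalently, a version of Stone's theorem for antisymmetric operators) furnishes simultaneously the skew-adjointness $\bA^*=-\bA$ and the generation of a $C_0$ group of isometries $\{e^{t\bA}\}_{t\in\R}$. Thus the entire task reduces to verifying that the previous theorem's proof in fact demonstrates m-dissipativity of $-\bA$ as well.

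First I would address dissipativity of $-\bA$, which is immediate: for any $y\in \cD(-\bA)=\cD(\bA)$ one has $((-\bA y,y))_Y = -((\bA y,y))_Y = 0$, exactly by the computation performed in the dissipativity step for $\bA$ (the cancellation mediated by the K-J condition $\gamma[\psi]\equiv 0$ on $\R^2\setminus\Omega$ together with the coupling identity $\Dn\phi = -v$ on $\Omega$ is algebraically sign-neutral).

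Next I would verify maximality of $-\bA$, i.e.\ that $\mathscr{R}(-\bA+\lambda I)=Y$ for some $\lambda>0$. This is equivalent to solving $(\bA - \lambda I)y = -(f_1,f_2;g_1,g_2)$, which leads to the same system as in the proof above but with $-U$ in place of $+U$ in the transport components and the signs of the resolvent terms reversed. The crucial observation is that the static problem one reduces to is still a strongly elliptic Zaremba problem for $\tilde\phi = \phi_x$ with symbol $\Delta - U^2\partial_x^2$, whose strong ellipticity only requires $U^2<1$ and is insensitive to the sign of $U$. Hence the same application of the Savar\'e regularity theorem for mixed boundary problems yields $\phi_x\in H^1(\R^3_+)$, and the subsequent recovery of $\psi,\phi,u,v$ and the a priori energy estimate for $\lambda$ large proceed verbatim, merely with signs flipped in the transport identities (which drop out after integration by parts against $\nabla\phi$ and $\psi$).

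I do not anticipate a serious obstacle here, since no new analytic ingredient beyond the already-established proof is needed---only a careful inspection that the maximality argument is sign-symmetric in $U$. The only point that could conceivably go wrong is that the Zaremba estimate of \cite{savare} invoked relies on strong ellipticity, which, as noted, holds for $|U|<1$; thus the subsonic hypothesis is precisely what underwrites both directions. Once $\pm\bA$ are in hand as generators of contraction semigroups, the cited corollary closes the argument, and one concludes that $e^{t\bA}$ is a $C_0$ group of isometries with $\bA^*=-\bA$.
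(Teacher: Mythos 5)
Your proposal is correct and follows essentially the same route as the paper: the paper likewise observes that $((-\bA y,y))_Y=0$, that the maximality estimates are sign-insensitive (owing to the strong ellipticity of $\Delta-U^2\partial_x^2$ for $|U|<1$ and the inherent cancellations), and then invokes Cor.~2.4.11 of \cite{cas-har98}. The only difference is that you spell out the sign-flip bookkeeping for maximality of $-\bA$ in more detail than the paper, which simply asserts it.
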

% \textbf{Irena: We may want more details here, since unlike before the proof of maximality is not as direct.}
 \subsection{Abstract Representation of Boundary Conditions}
  In order to encode the flow boundary conditions abstractly into our operator representation of the evolution, we introduce the {\em flow-Neumann map}
 defined for the flow operator $$\bA_0 \begin{pmatrix}\phi\\\psi\end{pmatrix} \equiv   \begin{pmatrix}- U \phi_x + \psi\\ - U \psi_x + \Delta \phi \end{pmatrix} , $$ 
 with $$\cD(\bA_0)= \{ (\phi, \psi) \in Y_f \equiv H^1(  \realsthree_+) \times L_2(  \realsthree_+)\big|~  - U \phi_x + \psi \in H^1(  \realsthree_+),  ~~U \psi_x + \Delta \phi\in L_2 (  \realsthree_+), $$ $$\Dn \phi =0 \text{ on }~ \Omega, ~\psi =0  ~\text{ in }~\realstwo\backslash\Omega \}.$$
 By the argument utilized in the proof of maximality above, we notice that the membership in $\cD(\bA_0)$ implies that 
 $ \phi_x \in H^1(  \realsthree_+)$, $\phi  \in H^1(  \realsthree_+) $, and $\psi \in H^1(  \realsthree_+).$
 In addition, as before (neglecting the plate components), we have that  the operators $\pm\bA_0$ are $m$-dissipative on $Y_f$. 
This indicates that $\bA_0$ is skew-adjoint; we demonstrate the symmetric action below:
 $$(( \bA_0 y, \hat{y}))_{Y_f} = -U (\nabla \phi_x, \nabla\hat {\phi} )_{  \realsthree_+}  + (\nabla \psi, \nabla \hat{\phi} )_{  \realsthree_+}  - U (\psi_x ,\hat{ \psi} )_{  \realsthree_+} - ( \nabla \phi, \nabla \hat{\psi})_{  \realsthree_+} 
+ < \Dn \phi , \gamma[\hat{\psi} ]>_{\realstwo}  $$ 
We first utilize the boundary conditions:
$$ < \Dn \phi,\gamma[\hat{ \psi}]>_{\realstwo} =  < \Dn \phi, \gamma [\hat{\psi}]>_{\Omega} + < \Dn \phi,\gamma[\hat{ \psi}]>_{ \realstwo \backslash  \Omega}  =0.$$
Then, integrating by parts in $x$, using Green's formula, and once more utilizing the boundary conditions: 
  $$(( \bA_0 y, \hat{y}))_{Y_f} = U (\nabla \phi, \nabla\hat {\phi}_x )_{  \realsthree_+}  + (\nabla \psi, \nabla \hat{\phi} )_{  \realsthree_+}  + U (\psi ,\hat{ \psi}_x ) - ( \nabla \phi, \nabla \hat{\psi})$$
$$  = (\nabla \phi, U \nabla\hat {\phi}_x )_{  \realsthree_+}  - (\nabla \psi, \Delta  \hat{\phi} )_{  \realsthree_+}  + U (\psi ,\hat{ \psi}_x ) - ( \nabla \phi, \nabla \hat{\psi}).$$
Hence $$ (( \bA_0 y, \hat{y}))_{Y_f} =- ((y , \bA_0 \hat{y} ))_{Y_f}  ~\text{ for }~ y, ~\hat{y} \in \cD(\bA_0).$$

With a help of the flow map $\bA_0$,  we define Neumann-flow map as follows:
$$ N : L_2(\realstwo) \rightarrow Y_f $$ given by 
$$(\phi, \psi ) = Ng , ~~\text{ iff }$$ $$-U \phi_x + \psi + \phi  =0 ~\text{ and } ~- U \psi_x + \Delta \phi + \psi  =0~ \text{ in }~   \realsthree_+, ~\text{ with }~
\psi =0 ~\text{ in }~\realstwo\backslash\Omega, ~\text{ and }~\Dn \phi =-g  ~\text{ in }~ \Omega.$$

We then consider the associated regularity of the map $N$. Note that the Neumann map  is associated with the matrix operator $\bA_0 $ rather than the usual harmonic extensions associated with a scalar elliptic operator. This difference is due to the fact that K-J conditions affect both the flow and the  aeroelastic potential. 
In order to describe the regularity of the $N$ map we shall use the following anisotropic function spaces: 
$$H^{r,s}(D) = \{ f \in H^s(D),  \frac{\partial^r  f}{\partial x^r } \in H^s(D) \} $$ 
These spaces are subspaces of $H^s(D) $ with the additional information on regularity in $x$-direction.

\begin{lemma}
$$N\in \mathscr{L} \big(H^{1,-1/2}(\Omega) \rightarrow H^{1,1}(  \realsthree_+) \times H^{0,1}(  \realsthree_+)\big), $$ 
where $g\in H^{1,-1/2}(\Omega) $ indicates that $\partial_xg \in H^{-1/2}(\Omega)$ 
\end{lemma}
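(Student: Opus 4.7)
The plan is to reduce the claimed anisotropic mapping property to an isotropic base regularity for $N$ and then exploit the invariance of the static system in the $x$-direction.

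First I would establish the base case
$$N \in \mathscr{L}\bigl(H^{-1/2}(\Omega),\, H^1(\realsthree_+) \times L_2(\realsthree_+)\bigr).$$
The argument mirrors the maximality step for $\bA_0$ proved above, with the interior forcing set to zero and $g$ playing the role of the inhomogeneous Neumann datum. Eliminating $\psi$ via $\psi = U\phi_x - \phi$ reduces the system to the strongly elliptic scalar equation $\Delta_U\phi + 2U\phi_x - \phi = 0$ in $\realsthree_+$, subject to $\Dn\phi = -g$ on $\Omega$ and to the first-order relation $U\phi_x = \phi$ on $\realstwo\setminus\Omega$ coming from $\gamma[\psi]=0$. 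Following the device used in the maximality proof, I would pass to the derived variable $\tilde\phi = \phi_x$; on the complement the constraint then becomes a Dirichlet-type identity $\tilde\phi = \phi/U$, and the mixed-boundary regularity result \eqref{zar} at $\theta = 0$ applies. Combined with the $\lambda$-resolvent estimate used at the end of the preceding theorem, this delivers $\phi \in H^1(\realsthree_+)$, whence $\psi = U\phi_x - \phi \in L_2(\realsthree_+)$ follows automatically.

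Next I would exploit the $x$-translation invariance of the static system defining $N$. The interior operators $U\partial_x$, $\Delta$, and the identity all commute with $\partial_x$, and since $\partial_x$ is tangential to $\partial\realsthree_+$, differentiating the boundary identities gives $\Dn(\partial_x\phi) = -\partial_x g$ on $\Omega$ and $\gamma[\partial_x\psi] = 0$ on $\realstwo\setminus\Omega$ (distributionally in the interior of the complement). Consequently $(\partial_x\phi,\partial_x\psi) = N(\partial_x g)$. Applying the base case with datum $\partial_x g \in H^{-1/2}(\Omega)$ yields $\partial_x\phi \in H^1(\realsthree_+)$ and $\partial_x\psi \in L_2(\realsthree_+)$. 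Combining with the base case applied to $g$ itself produces $(\phi, \psi) \in H^{1,1}(\realsthree_+) \times H^{0,1}(\realsthree_+)$ together with the continuous-dependence estimate over $\|g\|_{H^{1,-1/2}(\Omega)}$.

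The principal obstacle lies in the base case. The effective elliptic operator is not the standard Laplacian, and the boundary condition on the complement $\realstwo\setminus\Omega$ is not purely Dirichlet but couples $\phi$ and $\phi_x$. Reformulating through $\tilde\phi = \phi_x$ converts the complement constraint into a Dirichlet-type equation, but the Dirichlet data $\phi/U$ is itself part of the unknown, so the estimate must be closed self-consistently, e.g.\ by bootstrapping a weak $H^1$ solution or by iterating the mixed-boundary estimate against the $\lambda$-resolvent bound from the maximality proof. A secondary but necessary technical point is verifying that tangential differentiation intertwines with the trace map and with $N$ at the $H^{-1/2}$ level, which is a standard duality calculation but worth carrying out explicitly given the low regularity of the datum.
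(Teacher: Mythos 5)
Your route and the paper's share the same essential idea --- gain one tangential $x$-derivative by passing to $\phi_x$ and invoking the Zaremba regularity at $\theta=0$ --- but you structure it differently and, in doing so, sidestep a point the paper glosses over. The paper derives directly the scalar problem for $\tilde\phi = \phi_x$ and asserts the complement boundary condition $\tilde\phi = 0$ on $\realstwo\setminus\Omega$; this is not what the constraint $\gamma[\psi]=0$ gives, since $\psi = U\phi_x - \phi$ makes the Dirichlet trace of $\tilde\phi$ equal to $\gamma[\phi]/U$, which is itself unknown. Your commutation framing, $(\partial_x\phi,\partial_x\psi) = N(\partial_x g)$, keeps the complement datum genuinely homogeneous (it becomes $\gamma[\partial_x\psi]=0$, matching the defining condition for $N$), and so the second application of the base estimate is clean. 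What your approach asks in exchange is a self-contained base case $N\in\mathscr{L}\big(H^{-1/2}(\Omega),H^1(\realsthree_+)\times L_2(\realsthree_+)\big)$, which you outline via the maximality argument but do not close: the maximality step in the paper runs with Neumann data in $H^2(\Omega)$ coming from the plate velocity, whereas here the variational/weak-solution estimate has to be carried out with $g\in H^{-1/2}(\Omega)$, testing the scalar problem $\Delta_U\phi + 2U\phi_x - \phi =0$ against $\phi$ while handling the nonstandard coupling $U\phi_x=\phi$ on the complement. That part of the argument should be spelled out rather than gestured at, since it is where the Zaremba estimate with $\theta=0$ must actually be installed; once it is in place, the $\partial_x$-commutation step closes the anisotropic bound exactly as you propose.
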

\begin{proof}
The regularity of the Neuman-flow problem is related to the Zaremba elliptic problem (as discussed above). 
First, we take $Ng = (\phi, \psi) $, where  $\psi = U \phi_x - \phi  $ and 
$$ - U^2 \phi_{xx} + \Delta \phi  - \phi +2U\phi_x=0,$$
with the mixed boundary conditions:
$$ \Dn \phi =-g, ~\text{ in }~\Omega , ~~\psi =0 ~\text{ in }~\realstwo \backslash\Omega. $$ 
Thus $\tilde{\phi} = \phi_x $ satisfies the problem
$$ - U^2 \tilde{\phi}_{xx} + \Delta \tilde{\phi}  - \tilde{\phi}+2U\tilde{\phi}_x  =0 ~\text{ in }~  \realsthree_+  $$
$$\tilde{\phi} =0 ~\text{ in }~\realstwo \backslash \Omega, ~\Dn \tilde{\phi} =-g_x ~\Omega. $$
 This is Zaremba mixed problem, which then yields (with $g_x \in H^{-1/2} (\Omega)$) the solution $ \tilde{\phi} \in H^1(  \realsthree_+). $ Consequently $\psi = U \tilde{\phi} \in H^1(  \realsthree_+).$ Finally 
 $\phi = U \phi_x - \psi \in H^1(\realsthree_+ ) $. 
 \end{proof}
 \begin{remark}
 We again emphasize the dependency of the above result on the strong ellipticity of the operator $\Delta_U=\Delta-U^2\partial^2_x$. The operator enjoys this property only when considering subsonic flows $0\le U <1$.
 \end{remark}
 Our next result identifies $N^*[ \bA_0^*+I] $ with a trace operator.  Here, the adjoint taken is with respect to the $L_2(\Omega) \rightarrow Y_f$ topology. This is reminiscent of a classical Neumann map: 
 \begin{lemma}\label{duality}
 Let $(\phi, \psi) \in \cD(\bA_0^*) $. Then 
 $N^*[ \bA_0^* + I ]  (\phi, \psi) = \gamma [\psi] $.  
 \end{lemma}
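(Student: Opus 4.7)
The plan is a direct Green's identity computation on $Y_f$, leveraging the skew-adjointness of $\bA_0$ established in the preceding corollary. Given $g\in L_2(\Omega)$ and $(\phi,\psi)\in\cD(\bA_0^*)=\cD(\bA_0)$, set $(u,w)=Ng$. Since $\bA_0^*=-\bA_0$, it suffices to show that
\[
\bigl(\!\bigl(Ng,\,(I-\bA_0)(\phi,\psi)\bigr)\!\bigr)_{Y_f} \;=\; <g,\gamma[\psi]>_{L_2(\Omega)},
\]
since $N^*[\bA_0^*+I](\phi,\psi)=\gamma[\psi]$ then follows from the definition of the adjoint $N^*$ (with respect to $L_2(\Omega)\to Y_f$) and the arbitrariness of $g$.

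First I would expand the $Y_f$ pairing $((Ng,\bA_0(\phi,\psi)))_{Y_f}$ using the $H^1(\realsthree_+)\times L_2(\realsthree_+)$ inner product, and move all $x$- and $z$-derivatives off $(\phi,\psi)$ and onto $(u,w)$. Integration by parts in $x$ is free of boundary contributions, since the $x$-direction extends to all of $\reals$ with decay at infinity, while Green's identity applied to the Laplacian and gradient terms produces two surface integrals over $\pd\realsthree_+\simeq\realstwo$ of the form $<\Dz u,\gamma[\psi]>_{\realstwo}$ and $<\gamma[w],\Dz\phi>_{\realstwo}$. Substituting the interior identities $Uu_x=w+u$ and $Uw_x=\Delta u+w$ satisfied by $(u,w)=Ng$ then collapses all the volume contributions into precisely $((Ng,(\phi,\psi)))_{Y_f}$, which cancels the $+I$ contribution coming from replacing $\bA_0^*+I$ by $I-\bA_0$, leaving only the two surface integrals.

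The mixed boundary conditions then do the rest. From $(\phi,\psi)\in\cD(\bA_0)$ we have $\gamma[\psi]=0$ on $\realstwo\setminus\Omega$ and $\Dn\phi=0$ on $\Omega$; from the definition of $N$ we have $\gamma[w]=0$ on $\realstwo\setminus\Omega$ and $\Dn u=-g$ on $\Omega$. Hence the $\gamma[w]\cdot\Dz\phi$ term vanishes (its support lies in $\Omega$, where $\Dz\phi=-\Dn\phi=0$), while the $\Dz u\cdot\gamma[\psi]$ term reduces to $<g,\gamma[\psi]>_\Omega$ via $\Dz u=-\Dn u=g$ on $\Omega$. The main obstacle is rigor rather than structure: $(\phi,\psi)\in\cD(\bA_0)$ gives only $H^1$ spatial regularity, which is borderline for classical Green's identities, and the Zaremba-type mixed conditions mean the two surface pairings cannot be interpreted independently in standard trace spaces. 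I would overcome this by (a) invoking the improved regularity $\phi_x\in H^1(\realsthree_+)$ already extracted via the Zaremba argument used for maximality of $\bA$, so that the traces $\Dz\phi,\Dz u$ and the tangential derivatives are well-defined in the anisotropic spaces $H^{r,s}$ introduced for $N$ above, and (b) passing to the limit from a smooth core of $\cD(\bA_0)$ via the continuity of $\gamma$ together with the $L_2$-regularity of $g$ to justify all pairings.
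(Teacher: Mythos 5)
Your proposal is correct and takes essentially the same approach as the paper: expand the $Y_f$ pairing $([\bA_0^*+I](\phi,\psi),Ng)_{Y_f}$, integrate by parts in $x$ (boundary-free) and apply Green's identity to produce surface terms, substitute the interior relations defining $N$ to cancel all the volume terms, and invoke the complementary supports given by the mixed boundary conditions to reduce the boundary terms to $\langle g,\gamma[\psi]\rangle_\Omega$. Your explicit flag that the regularity $\phi_x,\psi\in H^1(\realsthree_+)$ (from the Zaremba analysis used for maximality) plus a density argument is needed to justify the trace pairings is a valid point the paper leaves implicit.
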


 \begin{proof}
 
 Let $Ng = (\hat{\phi}, \hat{ \psi} ) $. This means: $$ - U \hat{\phi}_x + \hat \psi + \hat \phi =0,~~ - U \hat{\psi}_x + \Delta \hat{\phi} + \hat \psi =0,~~ \Dn \hat{\phi} =-g \text{ on } \Omega,~~ \hat{\psi} =0 \text{ on } \realstwo \backslash \Omega$$
\begin{align*}
 < N^*[ \bA^*_0 + I ]  (\phi, \psi) , g >_{\Omega} =&~ ([\bA^*_0 + I ] (\phi, \psi) ,N g )_{Y_f}  \\ 
 =&~ \big(\nabla(U \phi_x -  \psi  + \phi),  \nabla \hat{\phi}\big)_{  \realsthree_+} +
(U\psi_x - \Delta \phi + \psi , \hat{\psi} )_{  \realsthree_+}  \\
=&~ (U\nabla \phi_x,\nabla \hat \phi)_{\realsthree_+}-(\nabla \psi, \nabla \hat \phi)_{\realsthree_+}+ (\nabla \phi, \nabla \hat \phi)\\
&+(U\psi_x, \hat{\psi} )_{  \realsthree_+}  - (\Delta \phi, \hat{\psi} )_{  \realsthree_+}  + (\psi , \hat{\psi} )_{  \realsthree_+} \end{align*} 
We utilize Green's theorem in the second and fifth terms:
\begin{align*} 
\phantom{< N^*[ \bA^*_0 + I ]  (\phi, \psi) , g >_{\Omega}}=&~ (U\nabla \phi_x,\nabla \hat \phi)_{\realsthree_+}+( \psi, \Delta \hat \phi)_{\realsthree_+}+ (\nabla \phi, \nabla \hat \phi)\\
&+(U\psi_x, \hat{\psi} )_{  \realsthree_+}  + (\nabla \phi, \nabla \hat{\psi} )_{  \realsthree_+}  + (\psi , \hat{\psi} )_{  \realsthree_+}\\
&-<\gamma[\psi],\Dn \hat \phi>_{\realstwo}+<\Dn \phi , \gamma[\hat \psi]>_{\realstwo}
\end{align*}
We may simplify the boundary terms using the boundary conditions for $(\hat \phi, \hat \psi)$ and the fact that $(\phi,\psi) \in \cD(\bA_0^*)=\cD(\bA_0)$.  \begin{align*}
\phantom{< N^*[ \bA^*_0 + I ]  (\phi, \psi) , g >_{\Omega}}=&~ (U\nabla \phi_x,\nabla \hat \phi)_{\realsthree_+}+( \psi, \Delta \hat \phi)_{\realsthree_+}+ (\nabla \phi, \nabla \hat \phi)\\
&+(U\psi_x, \hat{\psi} )_{  \realsthree_+}  + (\nabla \phi, \nabla \hat{\psi} )_{  \realsthree_+}  + (\psi , \hat{\psi} )_{  \realsthree_+}\\
&+<\gamma[\psi],g>_{\Omega}
\end{align*}

At this point we utilize the relations from the $N$ map in the second and fifth terms:
 \begin{align*}
\phantom{< N^*[ \bA^*_0 + I ]  (\phi, \psi) , g >_{\Omega}}=&~ (U\nabla \phi_x,\nabla \hat \phi)_{\realsthree_+}+( \psi, (U\hat{\psi_x}-\hat \psi))_{\realsthree_+}+ (\nabla \phi, \nabla \hat \phi)\\
&+(U\psi_x, \hat{\psi} )_{  \realsthree_+}  + (\nabla \phi, \nabla (U\hat\phi_x-\hat \phi) )_{  \realsthree_+}  + (\psi , \hat \psi )_{  \realsthree_+}\\
&+<\gamma[\psi],g>_{\Omega}\\
=& <\gamma[\psi],g>_{\Omega},
\end{align*}
where in the last line we have utilized integrated by parts multiple times and used the fact that $\phi \in H^1(\realsthree)$ and $\psi$ compactly supported on $\realstwo$.
\end{proof}
With the introduced notation we can express the flow-structure operator as 
 \begin{equation}\label{op-AS}
\bA \begin{pmatrix}
\phi\\
\psi\\ u\\ v
\end{pmatrix}=\begin{pmatrix} \cdot\\\bA_0  \Big[\begin{pmatrix}\phi\\ \psi\end{pmatrix}  - N v \Big]  - Nv 
 \\ v \\ -\cA u + N^* (\bA_0^*  + I )  \begin{pmatrix}\phi\\\psi \end{pmatrix} \end{pmatrix}.
\end{equation}
This new representation of $\bA$ encodes the boundary conditions, and further reveals the antisymmetric structure of the problem.

\subsection{Cauchy Problem}
Having established that $\bA$ is $m$-dissipative, the Cauchy problem
\begin{equation}\label{eq-trankt-0}
y_t=\bA y, ~y(0)=y_0 \in Y
\end{equation}
 is well-posed on $Y$. 
 
The dynamics of the {\it original } fluid-structure interaction in \eqref{flowplate} can be re-written (taking into account the action and domain of $\bA$) as
\begin{equation}\label{op-ASf}
\bB_L   \begin{pmatrix}
\phi\\
\psi\\ u\\ v
\end{pmatrix}  = \bA \begin{pmatrix}
\phi\\
\psi\\ u\\ v
\end{pmatrix}  + \bP \begin{pmatrix}  \phi \\ \psi\\ u\\ v \end{pmatrix} ,
\end{equation}
where $\bP$ is what remains of the dynamics in \eqref{flowplate} which is not captured by $\bA$.
This allows us to treat the problem of well-posedness within the framework of ``unbounded trace perturbations",
where the perturbation in question becomes 
\begin{equation}\label{op-ASP}
\bP \begin{pmatrix}
\phi\\
\psi\\ u\\ v
\end{pmatrix}=\begin{pmatrix} \cdot\\-U (\bA_0+I)N u_x 
 \\ 0 \\ 0  \end{pmatrix}
\end{equation}
Here $(\bA_0+I)N $ is defined via duality (using its adjoint expression) via Lemma \ref{duality}.

We now verify that $\bA+\bP$ (computed formally) fully encodes the dynamics of \eqref{flowplate}. 
\begin{align*}
(\bA+\bP)y=&\begin{pmatrix}\cdot\\ \bA_0\Big[\begin{pmatrix} \phi \\ \psi\end{pmatrix}-N(v+Uu_x)\Big]-N(v+Uu_x) \\ v \\ -\cA+N^*(\bA^*_0+I)\begin{pmatrix}\phi\\\psi \end{pmatrix} \end{pmatrix}
\end{align*}
That the plate components are correct is standard. We focus on the flow component: 
Let $\begin{pmatrix} \hat \phi \\ \hat \psi \end{pmatrix} = N(v+Uu_x)$. This implies that 
\begin{equation}\label{piece}-U\hat \phi_x+\hat \psi = - \hat\phi,~~-U\hat\psi_x+\Delta \hat\phi = -\hat\psi, ~~\Dn \hat\phi = -(v+Uu_x)~\text{ on } \Omega,~~\hat\psi = 0 \text{ on } \realstwo\backslash \Omega.\end{equation}
Hence
\begin{align*}
\bA_0\Big[\begin{pmatrix} \phi \\ \psi\end{pmatrix}-N(v+Uu_x)\Big]-N(v+Uu_x) =& ~\bA_0\Big[ \begin{pmatrix} \phi \\ \psi \end{pmatrix}-\begin{pmatrix} \hat \phi \\ \hat \psi\end{pmatrix}\Big] -\begin{pmatrix} \hat \phi \\ \hat \psi\end{pmatrix}\\
=&~ \begin{pmatrix} -U(\phi+\hat \phi)_x-(\psi+\hat \psi) \\ -U(\psi+\hat \psi)_x-\Delta(\phi+\hat \phi)\end{pmatrix}-\begin{pmatrix} \hat \phi \\ \hat \psi \end{pmatrix} \\
=&~\begin{pmatrix}-U\phi_x+\psi \\ -U\psi_x+\Delta \phi \end{pmatrix} -\begin{pmatrix} -U\hat \phi_x+\hat \psi \\ -U\hat \psi_x+\Delta \hat \phi \end{pmatrix}-\begin{pmatrix} \hat \phi \\ \hat \psi \end{pmatrix}\\
=&~\begin{pmatrix}-U\phi_x+\psi \\ -U\psi_x+\Delta \phi \end{pmatrix} 
\end{align*} where we have used \eqref{piece} in the last line to make the cancellation.

\section{Generation for the Full Dynamics: $\bA+\bP$}
We would like to recast the full dynamics of the problem in \eqref{flowplate} as a Cauchy problem in terms of the operator $\bA$. To do this, we define an operator $\bP:Y \to \mathscr{R}(\bP)$ as follows:
\begin{equation}\label{op-P}
\bP\begin{pmatrix}\phi\\\psi\\u\\v \end{pmatrix}= \bP_{\#}[u]\equiv\begin{pmatrix}0\\-U\bA_0N\partial_x u\\0\\0\end{pmatrix}
\end{equation}
Specifically, the problem in (\ref{flowplate}) has the abstract Cauchy formulation:
\begin{equation*}
y_t = (\bA +\bP)y, ~y(0)=y_0,
\end{equation*}
where $y_0 \in Y$ will produce semigroup (mild) solutions to the corresponding integral equation, and $y_0 \in \cD(\bA)$ will produce classical solutions. To find solutions to this problem, we will consider a fixed point argument, which necessitates interpreting and solving the following inhomogeneous problem,  and then producing the corresponding estimate on the solution:
\begin{equation} \label{inhomcauchy}
y_t = \bA y +\bP_\# \overline{u}, ~t>0, ~y(0)=y_0,
\end{equation}
for a given $\overline{u}$. To do so, we must understand how $\bP$ acts on $Y$
(and thus $\bP_\#$ on $H_0^2(\Om)$).
\par
To motivate the following discussion, consider for $y \in Y$ and $z= (\overline\phi, \overline\psi;  \overline u, \overline v)$ the formal calculus (with $Y$ as the pivot space)
\begin{align}\label{p-y-z}
(\bP y, z)_Y =& (\bP_{\#}[u],z)_Y
= -U((\bA_0+I)N\partial_x u,\overline \psi)
=-U<\partial_x u, \gamma[\overline \psi]>.
\end{align}
Hence, interpreting the operator $\bP$ (via duality) is contingent upon the ability to make sense of $\gamma[\overline \psi]$, which \textit{can} be done if $\gamma[\overline \psi] \in H^{-1+\epsilon}(\Omega)$ (since $u_x \in H^1(\Omega)$). In what follows, we show a trace estimate on $\psi$ (for semigroup solutions) of (\ref{inhomcauchy}) allows us to justify the program outlined above.
\par
We now state the trace regularity which is required for us to continue the abstract analysis of the dynamics. We relegate a discussion (and the corresponding proof in one dimension) to Section \ref{tracereg}.
  \begin{condition}[{\bf Flow Trace Regularity}]\label{le:FTR0}
   $\phi(\xb,t)$  satisfies \eqref{flow} and \eqref{KJcondition}, then $$\partial_t\gamma[\phi],~~ \partial_x\gamma[\phi]  \in L_2(0,T;H^{-1/2-\epsilon}(\realstwo))~~~~\forall\, T>0.
$$
Moreover,  with $\psi = \phi_t + U \phi_x $ we have
\begin{equation}\label{trace-reg-est-M0}
\int_0^T\|\gamma[\psi](t)\|^2_{H^{-1/2 -\epsilon} (\realstwo)}dt\le C_T\left(
E_{fl}(0)+
 \int_0^T\| \Dn \phi(t) \|_{\Omega}^2dt\right)
\end{equation}
\end{condition}
\noindent In what follows we implement microlocal analysis and reduce this theorem to a statement about integral integral transforms in $\realstwo$ analogous to the finite Hilbert transform. This theorem is critical for the arguments to follow concerning the perturbation $\bP$ and general abstract approach. We note, the above result holds for \textit{any} flow solver---we will be applying this result in the case where $\Dn \phi = - v \in C(0,T;L_2(\Omega))$ coming from a semigroup solution generated by $\bA$.
\par
At this stage we follow the approach taken in \cite{supersonic} by interpreting the variation of parameters formula for $\overline{u} \in C(\R_+; H^2_0(\Om))$
\begin{equation}\label{solution1}
y(t)=e^{\bA t}y_0 + \int_0^t e^{\bA(t-s)}\bP_{\#} [\overline{u}(s)] ds.
\end{equation}
by writing (with some $\la\in\R$, $\la\neq 0$):
\begin{equation}\label{solution2}
y(t)=e^{\bA t}y_0 + (\lambda  -\bA)\int_0^t e^{\bA(t-s)}(\lambda  - \bA)^{-1}\bP_{\#} [\overline{u}(s)]  ds.
\end{equation}
 We initially take this solution in $[\cD(\bA^*)]'=[\cD(\bA)]'$ (via the skew-adjointness of $\bA$), i.e., by considering the solution $y(t)$ in (\ref{solution2}) above acting on an element of $\cD(\bA^*)$.

 \subsection{Abstract Semigroup Convolution}
 We now recall a key theorem in the theory of abstract boundary control. This theorem will allow us to view the operator $\bP$ (mapping $Y$ outside of itself) as an unbounded perturbation. To do this, we critically implement the trace regularity theorem above in \eqref{le:FTR0}.  
 \par
 Let  $X$ and $\cU$ be  reflexive Banach spaces. We assume that
 \begin{enumerate}
\item[(C1)] $A$ is a linear operator which generates a strongly continuous semigroup $e^{At}$ on $X$.
\item[(C2)] $B$ is a linear continuous operator from $\cU$ to $[\cD(A^*)]'$ (duality with respect to the pivot space $X$), or equivalently, $(\lambda - A)^{-1}B \in \mathscr{L}(\cU,X)$ for all $\lambda \in \rho(A)$.
 \end{enumerate}
  For fixed $0<T<\infty$ and $u\in L_1(0,T;\cU)$ we  define  the convolution operator
  $$
  (Lu)(t) \equiv \int_0^t e^{A(t-s)}Bu(s) ~ds,~~0 \le t \le T,
  $$
  corresponding to the mild solution
  $$
  x(t) =e^{At}x_0+(Lu)(t),~~0 \le t \le T,
  $$
  of the abstract inhomogeneous equation
  $$
  x_t = Ax + Bu \in [\cD(A^*)]',~x(0)=x_0,
  $$
  with the input function $Bu(t)$.

\begin{theorem}[{\bf Inhomogeneous Abstract Equations}]\label{dualityequiv}
Let $X$ and $\cU$ be reflexive Banach spaces and
the  conditions in $(C1)$ and $(C2)$ be in force.
Then
\begin{enumerate}
\item The semigroup $e^{At}$ can be extended to the space $[\cD(A^*)]'$.
 \item
 $L$ is continuous from $L_p(0,T;X)$ to $C(0,T; [\cD(A^*)]')$ for every $p \in [1,\infty]$.
\item If $u \in C^1(0,T;X)$, then $Lu \in C(0,T;X)$.
 \item The condition
\begin{enumerate}
\item[(C3)] There exists a constant $C_T>0$ such that
\begin{equation*}%\label{C3-cond}
\int_0^T ||B^*e^{A^*t}x^*||^2_{\cU^*} dt \le C_T||x^*||^2_{X^*},
~~\forall\, x^*\in \cD(A^*)\subset X^*,
\end{equation*}
\end{enumerate}
  is equivalent to the regularity property
 $$L:~L_2(0,T;\cU) \to C(0,T;X)~~ \text{is continuous,}
 $$ i.e., there exists a constant $k_T>0$ such that
 \begin{equation}\label{conv-reg}
 ||Lu||_{C(0,T;X)} \le k_T ||u||_{L_2(0,T;\cU)}.
 \end{equation}
 \item Lastly, assume additionally that $A$ generates a strongly continuous group $e^{At}$ (e.g., if $A$ is
 skew-adjoint) and suppose that $L:L_2(0,T;\cU) \to L_2(0,T;X)$ is continuous. Then $(C3)$ is satisfied and thus we have the estimate in \eqref{conv-reg} in this case.
\end{enumerate}
\end{theorem}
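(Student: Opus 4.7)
The plan is to address the five assertions sequentially, relying throughout on duality between $X$ and its reflexive dual $X^*$ and on the strong continuity of $e^{A^*t}$ on $X^*$. For (1), the construction is the standard extrapolation: $\cD(A^*)$ with the graph norm is a Banach space on which $e^{A^*t}$ restricts to a $C_0$-semigroup; dualizing this restricted semigroup (and using reflexivity to identify the bidual with $X$) extends $e^{At}$ to a $C_0$-semigroup on $[\cD(A^*)]'$. Part (2) then follows immediately: since $B \in \mathcal{L}(\cU, [\cD(A^*)]')$, the integrand $e^{A(t-s)}Bu(s)$ admits the pointwise estimate $\|e^{A(t-s)}Bu(s)\|_{[\cD(A^*)]'} \le M e^{\omega T}\|B\|\,\|u(s)\|_\cU$, and continuity in $t$ (in the extrapolation topology) is inherited from the strong continuity of the extended semigroup, yielding the $L_p\to C$ bound by Bochner integration.

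For (3), the plan is to integrate by parts on the convolution. Writing $Bu(s) = (\lambda - A)(\lambda - A)^{-1}Bu(s)$ with $(\lambda - A)^{-1}B \in \mathcal{L}(\cU,X)$ by (C2), and using $(\lambda - A)e^{A(t-s)} = \lambda e^{A(t-s)} + \partial_s e^{A(t-s)}$, I would rewrite
\[
Lu(t) = \lambda \!\int_0^t e^{A(t-s)} (\lambda - A)^{-1} Bu(s)\,ds + (\lambda - A)^{-1} Bu(t) - e^{At}(\lambda - A)^{-1} Bu(0) - \!\int_0^t e^{A(t-s)}(\lambda - A)^{-1} B u'(s)\,ds,
\]
all terms of which lie in $X$ and depend continuously on $t$ when $u \in C^1(0,T;\cU)$, giving $Lu \in C(0,T;X)$.

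For (4), the heart of the theorem, the plan is the standard duality computation for the endpoint map. For each fixed $t \in [0,T]$, $L_t u \equiv (Lu)(t)$ has formal adjoint $(L_t^* x^*)(s) = \chi_{[0,t]}(s)\, B^* e^{A^*(t-s)} x^*$, so $L_t: L_2(0,T;\cU) \to X$ is bounded with constant $C$ if and only if
\[
\int_0^t \|B^* e^{A^*r} x^*\|^2_{\cU^*} \,dr \le C^2 \|x^*\|^2_{X^*}, \qquad x^* \in \cD(A^*),
\]
after the change of variable $r=t-s$. Specializing $t=T$ yields $L: L_2 \to C(0,T;X)$ continuous $\Rightarrow$ (C3). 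Conversely, (C3) provides a uniform-in-$t$ bound $\|(Lu)(t)\|_X \le k_T \|u\|_{L_2}$, and continuity in $t$ follows from strong continuity of $e^{At}$ on $X$ and absolute continuity of the convolution integral, so $Lu \in C(0,T;X)$.

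For (5), the plan is to exploit the group structure to reduce to (4). The identity $e^{A(T-t)}(Lu)(t) = L(u\chi_{[0,t]})(T)$, valid in $[\cD(A^*)]'$, together with the bounded invertibility of $e^{A(T-t)}$ afforded by the group hypothesis, shows that it suffices to establish the boundedness of the endpoint map $L_T: L_2(0,T;\cU) \to X$; from there (C3) follows by part (4). To extract this $L_T$-bound from only the $L_2 \to L_2$ continuity, I would pick a sequence of Lebesgue points $t_n \uparrow T$ at which $\|(Lu)(t_n)\|_X$ is controlled (via Chebyshev applied to the $L_2(0,T;X)$ bound), and pass to the limit in
\[
(Lu)(T) = e^{A(T-t_n)}(Lu)(t_n) + \int_{t_n}^T e^{A(T-s)} Bu(s)\,ds,
\]
using the uniform boundedness of the group on $[0,T]$ and closing the estimate through a uniform-boundedness argument on a dense set of inputs. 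The main obstacle is precisely this last step: converting $L_2\to L_2$ continuity into a genuinely pointwise-in-time $X$-valued estimate requires the group structure in an essential way, and is substantially more delicate than the purely algebraic duality manipulations of parts (1)--(4).
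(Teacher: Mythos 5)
The paper does not prove Theorem~\ref{dualityequiv}; it is explicitly recalled as a known result from abstract boundary control theory (see \cite{redbook}, with a proof in \cite{supersonic}), so your proposal can only be judged on its own merits. Parts (1)--(4) are essentially correct and follow the standard route: extrapolation via $(\cD(A^*))'$ for (1) (the dual semigroup is strongly continuous because $\cD(A^*)$, as a closed subspace of $X^*\times X^*$, is reflexive); the integration-by-parts identity for (3) (which should strictly be justified by a regularization, since $(\lambda-A)^{-1}Bu(s)\in X$ need not lie in $\cD(A)$, but the final formula is valid and every summand is in $C(0,T;X)$); and the duality computation $(L_T^*x^*)(s)=B^*e^{A^*(T-s)}x^*$ for (4), which together with the density argument from (3) yields the stated equivalence.

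Part (5), however, has a genuine gap. Your Chebyshev/Lebesgue-point device does not produce a uniform-in-$u$ bound on $\|(Lu)(t_n)\|_X$ for $t_n\to T$: Chebyshev only says the set where $\|(Lu)(t)\|_X>\lambda$ has measure at most $\|Lu\|^2_{L_2}/\lambda^2$, so to find good points in $(T-\epsilon,T)$ one is forced to take $\lambda\sim\epsilon^{-1/2}$, which diverges. Worse, the remainder $\int_{t_n}^T e^{A(T-s)}Bu(s)\,ds$ tends to $0$ only in $[\cD(A^*)]'$, not in $X$, so the identity cannot be used to pass to the limit in $X$. The correct use of the group is different and does not attempt to bound $L_T$ directly. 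For $t_0<T$ and $u$ supported in $[0,t_0]$ one has $(Lu)(t)=e^{A(t-t_0)}(Lu)(t_0)$ for $t\ge t_0$; since $\|e^{As}x\|_X\ge M^{-1}\|x\|_X$ on $[0,T]$ with $M=\sup_{[0,T]}\|e^{-As}\|$, this gives
\[
\frac{T-t_0}{M^2}\,\|(Lu)(t_0)\|_X^2 \;\le\; \int_{t_0}^T\|(Lu)(t)\|_X^2\,dt \;\le\; \|L\|^2_{L_2\to L_2}\,\|u\|^2_{L_2(0,t_0;\,\cU)},
\]
so $L_{t_0}:L_2(0,t_0;\cU)\to X$ is bounded for every $t_0<T$. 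Applying the duality equivalence of part (4) on the interval $[0,T/2]$ gives (C3) on $[0,T/2]$, and the semigroup identity $B^*e^{A^*(t+T/2)}x^*=B^*e^{A^*t}\big(e^{A^*T/2}x^*\big)$ then propagates (C3) from $[0,T/2]$ to $[0,T]$. This is the step your argument is missing.
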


\subsection{Application of the Abstract Scheme}
 In what follows, we adhere to the approach taken in \cite{supersonic}. We provide the theorems for self-containedness of the exposition, but for many of the proofs below we provide only brief comments. For complete details, see the manuscript \cite{supersonic}.
We will require the auxiliary space: $$
Z \equiv \left\{y =\begin{pmatrix} \phi \\ \psi \\ u \\ v \end{pmatrix}\in Y :
-U\pd_x \phi+\psi \in H^{1}(\R^3_+) \right\}
$$
endowed with the norm
\[
\|y\|_Z=\|y\|_Y+\|-U\pd_x \phi+\psi\|_{H^{1}(\R^3_+)}.
\]
One can see that $Z$ is dense in $Y$.
We also note that by the definition of  $\cD(\bA)$, we have $\cD(\bA) \subset Z$
and thus  $Z' \subset [\cD(\bA)]'$ continuously.
\begin{lemma}\label{le:P-op}
The operator $\bP_{\#}$ given by \eqref{op-P}
 is a bounded linear mapping  from $H^2_0(\Om)$ into $Z'$.
 Moreover,  the following estimates are in force:
 \begin{equation}\label{y-to-z1}
   ||\bP_{\#} [u]||_{Z'}\le C_U \|u\|_{H^2(\Om)}, ~~~\forall\, u\in H^2_0(\Om),
 \end{equation}
 and also (with  $\la\in\R$, $\la\neq 0$):
  \begin{equation}\label{y-to-z2}
   ||(\la-\bA)^{-1}\bP_{\#}[ u]||_{Y}\le C_{U,\la} \|u\|_{H^2(\Om)}, ~~~\forall\, u\in H^2_0(\Om).
 \end{equation}
 In the latter case we understand
$(\la-\bA)^{-1}\,  : [\cD(\bA)]'\mapsto Y$  as
the inverse to  the operator $\la-\bA$ which is extended to a mapping  from
$Y$ to  $[\cD(\bA)]'$. We also have that \eqref{y-to-z1} and \eqref{y-to-z2}
imply that $\bP$ maps $Y$ into $Z'$ and
 \begin{equation*}%\label{y-to-z3}
   ||\bP y ||_{Z'}\le C \|y\|_{Y}  ~~\mbox{and}~~
||(\la-\bA)^{-1}\bP y||_{Y}\le C \|y\|_{Y},~~ \forall\, y\in Y.
 \end{equation*}
\end{lemma}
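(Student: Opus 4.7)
The plan is to reduce the entire statement to the formal duality identity \eqref{p-y-z} combined with a negative-order trace estimate extracted from the defining condition of $Z$. For $z = (\overline\phi, \overline\psi; \overline u, \overline v) \in Z$, Lemma \ref{duality} justifies interpreting
$$\langle \bP_{\#}[u],\, z\rangle_{Z',Z} = -U\langle \partial_x u,\, \gamma[\overline\psi]\rangle$$
rigorously, provided the right-hand side is a well-defined duality pairing. To secure this, I would first extract a trace bound on $\overline\psi$: writing $\overline\psi = U\partial_x\overline\phi + h$ with $h \equiv -U\partial_x\overline\phi + \overline\psi \in H^1(\realsthree_+)$ by the definition of $Z$, one has $\gamma[h] \in H^{1/2}(\realstwo)$ by trace theory, and $\partial_x\gamma[\overline\phi] \in H^{-1/2}(\realstwo)$ since $\gamma[\overline\phi] \in H^{1/2}(\realstwo)$. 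Adding these delivers $\|\gamma[\overline\psi]\|_{H^{-1/2}(\realstwo)} \le C\|z\|_Z$.

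For estimate \eqref{y-to-z1}, I would observe that $u \in H^2_0(\Omega)$ extends by zero to $H^2(\realstwo)$ with compact support, so $\partial_x u$ extends to $H^1(\realstwo) \hookrightarrow H^{1/2}(\realstwo)$ with $\|\partial_x u\|_{H^{1/2}(\realstwo)} \le C\|u\|_{H^2(\Omega)}$. The $H^{1/2}$--$H^{-1/2}$ duality on $\realstwo$ then yields
$$|\langle \partial_x u,\, \gamma[\overline\psi]\rangle| \le C_U\|u\|_{H^2(\Omega)}\|z\|_Z,$$
which is \eqref{y-to-z1}.

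For \eqref{y-to-z2}, I would invoke the skew-adjointness of $\bA$ established in the preceding corollary, so that $\cD(\bA^*) = \cD(\bA)$ and both $\lambda - \bA$ and $(\lambda - \bA)^* = \lambda + \bA$ are isomorphisms $\cD(\bA) \to Y$ for any real $\lambda \neq 0$. The key compatibility is the continuous embedding $\cD(\bA) \hookrightarrow Z$: for $z \in \cD(\bA)$, the first coordinate of $\bA z$ equals $-U\partial_x\phi + \psi$, hence $\|z\|_Z \le \|z\|_Y + \|\bA z\|_Y = \|z\|_{\cD(\bA)}$. For any $g \in Y$, setting $z_g = (\lambda + \bA)^{-1}g \in \cD(\bA) \subset Z$ with $\|z_g\|_Z \le C_\lambda\|g\|_Y$, the identity
$$\big((\lambda-\bA)^{-1}\bP_{\#}[u],\,g\big)_Y = \langle \bP_{\#}[u],\, z_g\rangle_{Z',Z}$$
combined with \eqref{y-to-z1} delivers \eqref{y-to-z2}. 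The closing assertions about $\bP:Y\to Z'$ are then immediate, since $\bP y = \bP_{\#}[u]$ depends only on the plate displacement component and $\|u\|_{H^2_0(\Omega)} \le \|y\|_Y$.

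The principal obstacle, to my mind, is the negative-order trace bound on $\gamma[\overline\psi]$: $Z$-membership alone provides only $L_2$ regularity of $\overline\psi$, and one must recognize that the distinguished combination $-U\partial_x\overline\phi + \overline\psi \in H^1$ built into the definition of $Z$ is precisely engineered so that one derivative can be transferred onto $\gamma[\overline\phi] \in H^{1/2}$, producing a trace in $H^{-1/2}$. Once this bound is in hand, the remaining steps are routine extension-by-zero and resolvent-duality manipulations.
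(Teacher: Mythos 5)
Your proposal is correct and follows essentially the same route as the paper: the key step is exactly the decomposition $\gamma[\overline\psi]=\gamma[-U\partial_x\overline\phi+\overline\psi]+U\partial_x\gamma[\overline\phi]$, which exploits the distinguished $H^1$ combination built into the definition of $Z$ to obtain $\|\gamma[\overline\psi]\|_{H^{-1/2}(\realstwo)}\le C\|z\|_Z$, followed by the $H^{1/2}$--$H^{-1/2}$ pairing against $\partial_x u$. For \eqref{y-to-z2} the paper simply invokes the embedding $Z'\subset[\cD(\bA)]'$ and the boundedness of the extended resolvent, while you unfold this via the explicit duality identity with $z_g=(\lambda+\bA)^{-1}g\in\cD(\bA)\subset Z$; this is the same mechanism spelled out in more detail and is a perfectly valid substitute.
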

\begin{proof}
 For $u\in H^2_0(\Om)$ and $\overline{y}=(\overline{\phi},\overline{\psi};\overline{u},\overline{v}) \in Z$,
 from \eqref{p-y-z} we
have
\[
|(\bP_{\#}[ u], \overline y)_Y| =~U |(\bA_0N\partial_x  u, \overline \psi)|
= U |<\partial_x u, \gamma[\overline\psi]>|.
\]
Since $\ga[\overline\psi] = \ga[-U\pd_x \overline\phi+\overline\psi]+
U\pd_x\ga[\overline\phi]$, we have from the trace
theorem  that
\begin{equation*}%\label{gamma-y}
    ||\gamma[\overline\psi]||_{H^{-1/2}(\R^2)}\le C \left[||(-U\pd_x \overline\phi+\overline\psi)||_{H^{1}(\R_+^3)} +\|\overline
\phi||_{H^{1}(\R^3_+)}\right]\le C\|\overline y\|_{Z} .
\end{equation*}
Therefore
\[
|(\bP_{\#} [u], \overline y)_Y|\le
 C(U)  ||\pd_x u||_{H^{1/2}(\Omega)}||\gamma[\overline\psi]||_{H^{-1/2}(\R^2)}
\le  ~ C(U)  || u ||_{H^{2}(\Omega)} \| \overline y\|_{Z}.
\]
Thus
\[
||\bP_{\#} [u]||_{Z'} = \sup\big\{
|(\bP_{\#} [u], \overline y)_Y|\, : \overline y \in Z,~||\overline y||_Z =1\big\}
\le  C(U)  || u ||_{H^{2}(\Omega)},
\]
which implies \eqref{y-to-z1}.
 The relation in \eqref{y-to-z2} follows from  \eqref{y-to-z1} and
the boundedness of the operator
$(\la-\bA)^{-1}\, : [\cD(\bA)]'\mapsto Y$.
\end{proof}
We may now consider mild solutions to the problem given in (\ref{inhomcauchy}).
Applying general results on $C_0$-semigroups (see \cite{Pazy}) we arrive at the following assertion.
\begin{proposition}\label{pr:mild}
Let $\overline{u} \in C^1([0,T];H^{2}_0(\Omega))$ and $y_0\in Y$. Then $y(t)$
given by \eqref{solution1} belongs to  $C([0,T];Y)$ and
is a strong solution to \eqref{inhomcauchy} in $[\cD(\bA)]'$, i.e.
in addition we have that
\[
y\in  C^1((0,T);[\cD(\bA)]')
\]
and  \eqref{inhomcauchy} holds in $[\cD(\bA)]'$ for each $t\in (0,T)$.
\end{proposition}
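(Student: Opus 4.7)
The plan is to view the integral equation \eqref{solution1} as a Cauchy problem in the extrapolated space $[\cD(\bA)]'$, first extract there the strong-solution claims, and then upgrade $y$ to $C([0,T];Y)$ via the abstract Theorem \ref{dualityequiv}. The heavy lifting is a matter of translating the trace regularity Condition \ref{le:FTR0} into the continuity hypothesis (C3) of that theorem.

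First I would fix the framework by choosing $X=Y$, $\cU=H^2_0(\Om)$, $A=\bA$, $B=\bP_{\#}$. Condition (C1) of Theorem \ref{dualityequiv} is the already established generation of the $C_0$-group $e^{\bA t}$ on $Y$, while Lemma \ref{le:P-op} supplies (C2), since $\bP_\#:H^2_0(\Om)\to Z'\subset[\cD(\bA)]'$ continuously. The skew-adjointness of $\bA$ identifies $[\cD(\bA^*)]'=[\cD(\bA)]'$, so Part~1 of Theorem \ref{dualityequiv} extends the group to $[\cD(\bA)]'$. Because $\overline{u}\in C^1([0,T];H^2_0(\Om))$, Lemma \ref{le:P-op} promotes this to $\bP_\#\overline u\in C^1([0,T];[\cD(\bA)]')$. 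Applying the classical Pazy strong-solution theorem in the Banach space $[\cD(\bA)]'$ then yields $y\in C^1((0,T);[\cD(\bA)]')$, together with the validity of \eqref{inhomcauchy} as an identity in $[\cD(\bA)]'$. This takes care of the two strong-solution conclusions.

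The remaining point is $y\in C([0,T];Y)$. The free term $e^{\bA t}y_0$ already lies in $C([0,T];Y)$ by the group property on $Y$, so only the convolution $L\overline u(t)=\int_0^t e^{\bA(t-s)}\bP_\#\overline u(s)\,ds$ requires work. For this I would verify condition (C3) of Theorem \ref{dualityequiv}. Combining \eqref{p-y-z} with Lemma \ref{duality} identifies the formal adjoint as $\bP_\#^* z=-U\pd_x\ga[\psi]$ for $z=(\phi,\psi;u,v)\in\cD(\bA^*)$. Along the adjoint trajectory $(\phi(t),\psi(t);u(t),v(t))=e^{\bA^*t}z$, the constraint built into $\cD(\bA)$ forces $\Dn\phi(t)=-v(t)$ on $\Om$, and the isometric group $e^{\bA t}$ delivers a uniform $L_2(\Om)$-bound on $v(t)$. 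Condition \ref{le:FTR0} then gives
\[
\int_0^T\|\ga[\psi(t)]\|^2_{H^{-1/2-\epsilon}(\realstwo)}\,dt\le C_T\Big(E_{fl}(0)+\int_0^T\|v(t)\|^2_{\Om}\,dt\Big)\le C'_T\,\|z\|^2_Y,
\]
and the continuity of $\pd_x:H^{-1/2-\epsilon}(\Om)\to H^{-2}(\Om)\equiv\cU^*$ yields (C3). Part~4 of Theorem \ref{dualityequiv} then produces $L:L_2(0,T;H^2_0(\Om))\to C([0,T];Y)$ as a bounded operator; in particular $L\overline u\in C([0,T];Y)$ since $C^1\subset L_2$, which completes the proof.

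The only genuinely nontrivial step is the verification of (C3): without the hidden boundary regularity of the acceleration potential $\psi$ encoded in Condition \ref{le:FTR0}, the convolution $L\overline u$ would be known only in the extrapolated space $C([0,T];[\cD(\bA)]')$ furnished by Part~2 of Theorem \ref{dualityequiv}. All remaining ingredients---the $C_0$-group, its extrapolation, Pazy's strong-solution theorem in $[\cD(\bA)]'$, and Lemma \ref{le:P-op}---are standard once the abstract setting is in place.
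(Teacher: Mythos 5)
Your argument is correct but the last third is a detour that misreads where the proposition sits in the paper's logical architecture. The key observation you miss is that the classical-solution theorem you already invoke in the extrapolated space gives $y\in C([0,T];Y)$ automatically, with no need for condition (C3) or Condition \ref{le:FTR0}. Concretely: letting $\bA_{-1}$ denote the extension of $\bA$ to $[\cD(\bA)]'$, the domain of $\bA_{-1}$ with its graph norm is precisely $Y$ (since $\|(\la-\bA_{-1})x\|_{[\cD(\bA)]'}\sim\|x\|_Y$). Pazy's theorem applied in $[\cD(\bA)]'$ with $y_0\in Y=\cD(\bA_{-1})$ and $\bP_\#\overline u\in C^1([0,T];[\cD(\bA)]')$ delivers a classical solution, hence $y\in C([0,T];\cD(\bA_{-1}))\cap C^1((0,T);[\cD(\bA)]')$, and $C([0,T];\cD(\bA_{-1}))=C([0,T];Y)$. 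All three conclusions of the proposition therefore follow from the single application of Pazy that you already carried out; the proposition is genuinely unconditional.

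By instead routing the $C([0,T];Y)$ claim through (C3) and Condition \ref{le:FTR0}, you introduce a dependence on the trace-regularity hypothesis that is proved in the paper only in one space dimension and is otherwise an open assumption. That changes the logical status of the proposition from unconditional to conditional, which is not what the paper intends. The verification of (C3) via Condition \ref{le:FTR0} is indeed essential, but its role is to establish the subsequent ``$L$ Regularity'' theorem, where $\overline u$ is assumed only $C([0,T];H^2_0(\Om))$ and one cannot differentiate under the convolution. For $\overline u$ of class $C^1$, no hidden trace regularity is needed.
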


Proposition~\ref{pr:mild} implies that $y(t)$ satisfies
the variational relation
\begin{equation}\label{weak-eq}
    \pd_t(y(t),h)_Y=-(y(t), \bA h)_Y+ (\bP_{\#}[\overline{u}(t)],h)_Y,~~~\forall\, h\in \cD(\bA).
\end{equation}
In our context in application of Theorem~\ref{dualityequiv} we have that $X=Y$,
where $Y$ is Hilbert space given by
\eqref{space-Y}, $\cU=H^2_0(\Om)$ and $B=\bP_\#$ is defined by \eqref{op-P}
as an operator from  $\cU=H^2_0(\Om)$ into $Z'$ (see Lemma~\ref{le:P-op}).
One can see from \eqref{p-y-z} that the adjoint operator $\bP_\#^*\, : Z\mapsto H^{-2}(\Om)$ is given by
\[
\bP^*_{\#}z=U\big[\partial_x N^*(\bA^*_0+I) \overline \psi\big]\big|_\Om = U\partial_x \gamma[\overline \psi]\big|_\Om,
 ~~\mbox{for}~~ \ds z = \begin{pmatrix} \overline \phi \\ \overline \psi \\ \overline u \\ \overline v\end{pmatrix}\in Z.
 \]
Condition $(C3)$ in Theorem~\ref{dualityequiv} (4) is then paraphrased by writing
\begin{equation*}%\label{para}
y\mapsto
 \bP^*_\#e^{\bA^*(T-t)}y\equiv \bP^*_\#e^{\bA t}y_T : ~\text{continuous}~ Y \to L_2\big(0,T;H^{-2}(\Omega)\big),
\end{equation*}
where $y_T=e^{\bA^*T}y=e^{-\bA T}y$ (we use here the fact that $\bA$ is a skew-adjoint m-dissipative operator).
\par
Let us denote by
$e^{\bA t}y_T\equiv  w_T(t) =(\phi(t),\psi(t); u(t), v(t))$
the solution of the linear problem in \eqref{eq-trankt-0} with initial data $y_T$.
Then from the trace estimate in (\ref{trace-reg-est-M0}) we have that
 \begin{align*}
||\bP^*_\#e^{\bA t}y_T||^2_{L_2(0,T:H^{-2}(\Omega))} = &~ U\int_0^T ||\partial_x\ga[\psi(t)]||^2_{H^{-2}(\Omega)}dt \\ \nonumber
\le
& C \int_0^T ||\gamma[\psi(t)]||^2_{H^{-1/2-\epsilon}(\Omega)}  dt \le C_{T} \left(E_{fl}(0)+\int_0^T ||v(t)||^2_{L_2(\Omega)}\right)dt \\ \nonumber
\le &  C_{T}\| y_T\|_Y=C_{T}\| e^{-\bA T}y\|_Y  =C_{T}\|y\|_Y.
\end{align*}
In the last equality we also use that
 $ e^{\bA t}$ is a $C_0$-group of isometries.

Now we are fully in a position to use Theorem \ref{dualityequiv}
which leads to the following assertion.

\begin{theorem}[{\bf $L$ Regularity}]
Let $T>0$ be fixed, $y_0 \in Y$ and $\overline u \in C([0,T]; H_0^{2}(\Omega))$.
Then  the mild solution
 \begin{equation*}%\label{mild-sol}
 \ds y(t)=e^{\bA t}y_0+L[\overline u](t)\equiv
 e^{\bA t}y_0+\int_0^te^{\bA (t-s)}\bP_{\#}[\overline u(s)] ds
 \end{equation*}
 to problem \eqref{inhomcauchy} in $[\cD(\bA)]'$ belongs to the class $C([0,T]; Y)$
 and enjoys the estimate
\begin{align}\label{followest}
\max_{\tau \in [0,t]} ||y(\tau)||_Y \le&  ||y_0||_Y + k_T ||\overline u||_{L_2(0,t;H^{2}_0(\Omega))},~~~\forall\, t\in [0,T].
\end{align}
\end{theorem}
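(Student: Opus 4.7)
The plan is to apply Theorem~\ref{dualityequiv} with $X = Y$, $\cU = H^2_0(\Omega)$, $A = \bA$, and $B = \bP_{\#}$, and to conclude from part~(4) that $L : L_2(0,T;\cU) \to C([0,T]; Y)$ is continuous with the constant $k_T$ of the stated estimate. The proof then reduces to verifying the three hypotheses (C1), (C2), (C3) of that abstract theorem, after which the bound on $y(t)$ follows by the triangle inequality from the isometry property of $e^{\bA t}$ and the continuity of $L[\overline u]$ into $Y$.

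First I would record (C1) and (C2). (C1) is immediate: $\bA$ was shown to be $m$-dissipative on the Hilbert space $Y$ with $-\bA$ also $m$-dissipative, hence $\bA$ generates a $C_0$-group of isometries on $Y$, and in particular a strongly continuous semigroup. For (C2), Lemma~\ref{le:P-op} already supplies $\bP_{\#} \in \mathscr{L}(H^2_0(\Omega), Z')$ together with $(\lambda-\bA)^{-1}\bP_{\#} \in \mathscr{L}(H^2_0(\Omega), Y)$ for nonzero $\lambda \in \mathbb{R}$. Since $Z' \subset [\cD(\bA)]'$ continuously and $\bA$ is skew-adjoint (so $\cD(\bA^*) = \cD(\bA)$), this is precisely (C2) in its resolvent formulation.

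The substantive step is (C3). I would first identify $\bP_{\#}^*$ using \eqref{p-y-z} and Lemma~\ref{duality}: for $z = (\overline\phi,\overline\psi;\overline u,\overline v) \in Z$,
\[
\bP_{\#}^* z = U \partial_x \gamma[\overline\psi]\big|_\Omega .
\]
Setting $y_T = e^{\bA^* T} y = e^{-\bA T} y$ and denoting $e^{\bA t}y_T = (\phi(t),\psi(t);u(t),v(t))$, the required estimate reads
\[
\int_0^T \|\partial_x \gamma[\psi(t)]\|_{H^{-2}(\Omega)}^2 \, dt \le C_T \|y\|_Y^2.
\]
This is exactly where Condition~\ref{le:FTR0} is used. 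Losing one derivative in $x$ embeds $L_2(0,T; H^{-1/2-\epsilon}(\Omega))$ continuously into $L_2(0,T;H^{-2}(\Omega))$; then \eqref{trace-reg-est-M0} dominates $\|\gamma[\psi]\|_{L_2(0,T;H^{-1/2-\epsilon})}^2$ by $E_{fl}(0) + \int_0^T \|v(t)\|_\Omega^2 \, dt$, and the latter is bounded by $\|y_T\|_Y^2$. Finally, since $e^{\bA t}$ is a group of isometries, $\|y_T\|_Y = \|y\|_Y$. Everything else in the argument is abstract bookkeeping; the sole substantive obstacle is the trace regularity packaged into Condition~\ref{le:FTR0}, and this is precisely why that condition was introduced---it is the price of replacing the state variable $\phi_t$ by $\psi$ and of the K-J mixed boundary conditions.

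With (C1)--(C3) in hand, Theorem~\ref{dualityequiv}(4) immediately yields
\[
\|L[\overline u]\|_{C([0,T];Y)} \le k_T \|\overline u\|_{L_2(0,T; H^2_0(\Omega))}.
\]
Combining this with $\|e^{\bA t} y_0\|_Y = \|y_0\|_Y$ and the triangle inequality gives \eqref{followest} on $[0,T]$, and continuity $y \in C([0,T];Y)$ is inherited from continuity of both summands. The net effect is that the mild solution, originally produced by Proposition~\ref{pr:mild} only in the extrapolated space $[\cD(\bA)]'$, in fact lives in the finite-energy space $Y$; this is the regularity needed to subsequently run the fixed-point argument for $\bA + \bP$ on $Y$.
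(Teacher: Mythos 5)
Your proposal is correct and follows essentially the same route as the paper: identify $X=Y$, $\cU=H^2_0(\Omega)$, $B=\bP_\#$, verify (C1)--(C2) from the $m$-dissipativity of $\pm\bA$ and Lemma~\ref{le:P-op}, compute $\bP_\#^* z = U\partial_x\gamma[\overline\psi]|_\Omega$, reduce (C3) to the trace estimate supplied by Condition~\ref{le:FTR0} together with the isometry of the group $e^{\bA t}$, and then invoke Theorem~\ref{dualityequiv}(4) to obtain the continuity of $L$ into $C([0,T];Y)$ with the constant $k_T$. Your explicit spelling-out of (C1) and (C2) is a slightly more systematic presentation of what the paper leaves partly implicit, but the key substantive step and all the estimates are identical.
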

\begin{remark}
We emphasize that the perturbation $\bP$ acting outside of $Y$ is regularized when incorporated into the operator $L$ defined above; namely, the variation of parameters operator $L$ is a priori only continuous from $L_2(0,T;\cU)$ to $C(0,T;[\cD(\bA ^*)]')$. However, we have shown that the additional ``hidden'' regularity of the trace of $\psi$ for solutions to (\ref{flow}) with the boundary conditions in \eqref{KJcondition} allows us to bootstrap $L$ to be continuous from $L_2(0,T;\cU)$ to $C(0,T;Y)$ (with corresponding estimate) via Theorem \ref{dualityequiv}. This result  justifies \textit{formal} energy methods on the equation (\ref{inhomcauchy}) in order to produce a fixed point argument.\end{remark}

\subsection{Construction of a Generator}
Let  $\bX_t = C\big((0,t];Y\big).$
Now, take $\overline{y}=(\overline{\phi},\overline{\psi};\overline{u},\overline{v}) \in \bX_t$ and $y_0 \in Y$, and introduce the map $\cF: \overline{y} \to y$ given by
\begin{equation*}
y(t) = e^{\bA t}y_0+L[\overline u](t),
\end{equation*}
i.e. $y$ solves \begin{equation*}%\label{absproblem2}
y_t=\bA y+\bP_\# \overline u,
~~ y(0)=y_0,
\end{equation*}
in the generalized sense, where $\bP_\#$ is defined in \eqref{op-P}.
 It follows from (\ref{followest})
 that for $\overline y_1, ~\overline y_2 \in \bX_t$
 \begin{align*}
 \|\cF \overline y_1 - \cF \overline y_1\|_{\bX_t}\le & ~k_T ||\overline u_1 - \overline u_2||_{L_2(0,t;H^{2}_0(\Omega))} \\
 \le & ~k_T \sqrt{t}\max_{\tau \in [0,t]}|| \overline u_1 - \overline u_2||_{H^2(\Omega)}
 \le k_T \sqrt{t} ||\overline y_1 - \overline y_2 ||_{\bX_t}.
 \end{align*}
Hence there is $0<t_*<T$ and $q<1$ such that
\[
\|\cF \overline y_1 - \cF \overline y_2\|_{\bX_t}\le q \| \overline y_1 -  \overline y_2\|_{\bX_t}
\]
for every $t\in (0,t_*]$.
This implies that  on the interval $[0,t_*]$ the problem
\begin{equation*}%\label{absproblem-ff}
y_t=\bA y+\bP y, ~~t>0,~
~~ y(0)=y_0,
\end{equation*}
has a local in time unique (mild) solution defined now in $Y$. This above local solution
can be extended to a global solution in finitely many steps by linearity.
Thus there exists a unique function
$y=(\phi,\psi;u,v)\in C\big(\R_+;Y\big)$ such that
\begin{equation}\label{eq-fin}
y(t)=e^{\bA t}y_0+\int_0^te^{\bA(t-s)}\bP [y(s)]ds~~\mbox{ in }~ Y
~~\mbox{for all }~t>0.
\end{equation}
It also follows from the analysis above that
\[
\|y(t)\|_Y\le C_T \|y_0\|_Y,~~~ t\in [0,T],~~\forall\, T>0.
\]
Thus the problem \eqref{eq-fin} generates strongly continuous semigroup
$\widehat{T}(t)$ in $Y$.
Additionally, due to \eqref{weak-eq} we have
\begin{equation*}%\label{weak-eq1}
    (y(t),h)_Y=(y_0,h)_Y+ \int_0^t\left[
    -(y(\tau), \bA h)_Y+ (\bP[y(\tau)],h)_Y
    \right]d\tau
    ,~~~\forall\, h\in \cD(\bA),
    ~~ t>0.
\end{equation*}
Using the same idea as presented in \cite{jadea12, supersonic} (which relies on Ball's Theorem \cite{ball} and \cite{desh}), we conclude that
the generator $\widehat{\bA}$ of $\widehat{T}(t)$  has the form
\[
\widehat{\bA}z=\bA z+\bP z,~~z\in\cD(\widehat{\bA})=\left\{z\in Y\,:\; \bA z+\bP z\in Y\right\}
\]
(we note that  the sum $\bA z+\bP z$ is well-defined as an element
in $[\cD(\bA)]'$ for every $z\in Y$). Hence, the semigroup $e^{\widehat \bA t}y_0$ is a generalized solution for $y_0 \in Y$ (resp. a classical solution for $y_0 \in \cD(\widehat \bA)$) to (\ref{sys1*}) on $[0,T]$ for all $T>0$.
\begin{equation}\label{dom-bA-n}
\cD(\bA + \bP) \equiv \left\{ y \in Y\; \left| \begin{array}{l}
-U \Dx \phi + \psi \in H^1(\R^3_+),~\\ -U \Dx \psi  -\bA_0 (\phi - N (v + U \Dx u ))+N(v+U\Dx u) \in L_2(\R^3_+) \\
v \in \cD(\cA^{1/2} )= H_0^2(\Omega),~
-\cA u + N^* (\bA_0^*+I) \psi \in L_2(\Omega) \end{array} \right. \right\}
\end{equation}
Now we can conclude the proof of generation for the full dynamics of \eqref{flowplate} (represented by $\bA+\bP$); indeed, the
function $y(t)$ is a generalized solution corresponding to the generator $\bA + \bP$ with the  domain defined in (\ref{dom-bA-n}). 
%This proves the first statement in Theorem
% \ref{th:lin}. As for the second statement  (regularity), the invariance of the domain $\cD(\bA + \bP ) $ under the flow
% implies that solutions originating in $Y_1$ will remain in  $C([0, T]; Y_1)$. After identifying $ \phi_t = \psi - U \Dx \phi $
% one translates the membership in the domain into membership in $Y_1$.  Elliptic regularity applied to biharmonic operator yields  the precise regularity  results  defining {\it strong } solutions.

\section{Trace Regularity}\label{tracereg}
The ``hidden" trace regularity of the term $\psi$ coming from the flow equation \eqref{flow} is {\em critical} to the arguments above. In this section we analyze this problem in the dual (Fourier-Laplace) domain and relate it to a certain class of integral transforms reminiscent of the finite Hilbert transform. In the case of two dimensions, we reduce the trace regularity to an hypothesis about the invertibility of Hilbert-like transforms on bounded domains. Additionally, we demonstrate the necessary trace regularity by performing microlocal analysis on a pseduodifferential operator corresponding to the flow problem in one dimension. 

We are interested in the trace regularity of the following flow problem in $\realsthree_+$: \begin{equation}\label{flow1}\begin{cases}
(\partial_t+U\partial_x)^2\phi=\Delta \phi & \text { in } \realsthree_+ \times (0,T),\\
\phi(0)=\phi_0;~~\phi_t(0)=\phi_1,\\
\Dz \phi = d(\xb,t)& \text{ on } \Omega \times (0,T)\\
\gamma[\phi_t+U\phi_x]=\gamma[\psi]=0, ~&\xb \in \realstwo \backslash \Omega,
\end{cases}
\end{equation}
with $0 \le U <1$,  $d(\xb,t) \in L_2(0,T;L_2(\Omega))$ is the ``downwash" generated on the structure, and $\gamma [\psi] $ is the aeroelastic potential. 
\begin{remark}
An related analysis of  trace regularity for this equation was carried out in the case of $L_2^{loc}(\realstwo)$ Neumann data in \cite{supersonic}. Here, the mixed boundary conditions present a formidable challenge in the microlocal analysis.
\end{remark}
We recall  {\bf Condition \ref{le:FTR0}}:

{\em If $\phi(\xb,t)$  satisfies \eqref{flow1} then  for $\epsilon > 0 $  $$\partial_t\gamma[\phi]
%\partial_x\gamma[\phi]
 \in L_2(0,T;H^{-1/2-\epsilon}(\realstwo))~~~~\forall\, T>0.
$$
Moreover,  with $\psi = \phi_t + U \phi_x $ we have
\begin{equation}\label{trace-reg-est-M}
\int_0^T\|\gamma[\psi](t)\|^2_{H^{-1/2 -\epsilon} (\realstwo)}dt\le C_T\left(
E_{fl}(0)+
 \int_0^T\| \Dn \phi(t) \|_{\Omega}^2dt\right)
\end{equation}}
\begin{remark} 
Note that  $\partial_x\gamma[\phi] \in C(0, T; H^{-1/2}(\realstwo) $ follows from a priori interior regularity of $\phi$ and the fact that $x$ direction is tangential to the boundary. Thus, the only real requirement is  the trace  regularity of  the time derivative of $\phi$. 
\end{remark}
\begin{remark}
We recall that related regularity of  the trace to $\phi_t$ was derived  in \cite{supersonic} which deals with the Neumann boundary data. In fact, it was shown there that 
in the case of Neumann boundary conditions with both subsonic and supersonic velocities  $U$, 
 one obtains the regularity as in (\ref{trace-reg-est-M} ) with $\epsilon =0 $. 
 Though these are related regularity results, the techniques of obtaining them are very different. In the Neumann case we perform microlocal analysis by microlocalizing hyperbolic and elliptic sectors. In the present case, the regularity phenomenon has to do with spectral analysis of finite Hilbert transforms. 
 \end{remark}

In line with the similar analyses in \cite{miyatake1973,sakamoto}, we take zero initial data (the principle of superposition may then be applied). We consider the Fourier-Laplace transform of the original linear equation (formally, sending $(x,y) \to i(\eta_x,\eta_y)$ and $t \to \tau=(\alpha +i\beta$)), resulting in: 

\begin{equation} (\tau+iU\eta_x)^2\hat{\phi}+|\eta|^2\hat{\phi}=( \tau^2  + 2U i \eta_x \tau  + (1- U^2) \eta_x^2 + \eta_y^2 ) \hat{\phi} = \Dz^2 \hat{\phi}.\end{equation}
Let us denote 
\begin{equation}\label{D} D (\eta, \tau) \equiv (\tau^2  + 2U i \eta_x \tau  + (1- U^2) \eta_x^2 + \eta_y^2). \end{equation}   
Then, we must solve (in $z$)
$$\partial_z^2 \hat{\phi} = D (\tau, \eta)  \hat{\phi};$$ doing so, we
obtain 
 $ \hat{\phi}(\tau,\eta,z) = \hat{\phi} (\tau, \eta, 0) e^{-z \sqrt{D}}. $ 

 Our next step is to relate the boundary conditions to the the normal derivative and the acceleration potential $\psi$: 
 $$ \hat{\psi}(\tau,\eta, z) = (\tau + i  U \eta_x )  \hat{\phi} (\tau, \eta, 0) e^{-z \sqrt{D}} $$
 $$ \Dz \hat{\phi} = - \sqrt{D}  \hat{\phi} (\tau, \eta, 0)$$ 
 Hence, on $\partial \realsthree_+$  we have the relation
\begin{equation}\label{microrelation} \Dz \hat{\phi} = \frac{-\sqrt{D}}{\tau + iU \eta_x } \hat{\psi} \equiv m(\tau, \eta) \hat{\psi}. \end{equation}
This relation is supplemented with the information 
\begin{equation}\label{microBC} \begin{cases}
\Dz \phi = d& ~\text{ in }~  \Omega \\
{\psi}(t, x, y, 0)  =0&  ~\text{ outside }~\Omega. \end{cases}
\end{equation}
\begin{remark}
The boundary conditions above in \eqref{microBC} are the {\em key} feature which distinguish this analysis of trace regularity from that in \cite{supersonic}. \end{remark}

In what follows, we accommodate the mixed nature of the boundary conditions below by decomposing the symbol in \eqref{microrelation} into the product of two symbols which can be separately analyzed.
Let $\cT$ denote the pseudodifferential operator corresponding to the multiplier $m (\eta, \tau)$, and
let $P_{\Omega}: \realstwo \to \Omega $ denote projection on $\Omega$. 
Then for any $\psi$ with support contained in $\Omega$, the integral equation in \eqref{microrelation}--\eqref{microBC}  can be recast as 
\begin{equation}\label{P}
d =P_{\Omega}\cT\psi, ~~\xb \in \Omega, ~~t > 0, 
\end{equation}
noting that $\cT$ acts on a ``truncated" function $\psi$, so that 
the PDO operator 
$P_{\Omega} \cT $ can be viewed  as a PDO operator on $\Omega \times (0, \infty)$. 
\begin{remark}
We can formally construct $\cT$ as follows:
for any  $\psi$ with $\psi =0 $ outside $\Omega$, we construct Laplace (time) Fourier (space) transform of $\psi$ which
we denote $\hat{\psi} (\tau, \eta) $. Then the operator $\cT $ is the PDO operator 
such that $$(\widehat{\cT \phi})(\tau, \eta)  = m(\tau, \eta) \hat{\phi} (\tau, \eta).$$ Taking the inverse of the Fourier-Laplace transform, and applying projection on $\Omega$ gives the appropriate statement Possio type equation. \end{remark}

Equation (\ref{P}) is an abstract version of a Possio integral equation \cite{bal0,bal4} in two dimensions. We note here that by that we have already solved for $\psi$ given $d$; indeed, in analyzing the operator $\bA_0$ above, we have deduced solvability of the system in \eqref{flow1} for $(\phi,\psi) \in H^1(\realsthree_+)\times L_2(\realsthree_+)$. We are therefore interested in characterizing the solution: {\em given $h$ on $\Omega$ with some regularity, find the corresponding regularity of  aeroelastic potential $ \gamma[\psi]$ on $\Omega$}. 
 \begin{remark}
The connection between integral equations appearing in the study of aeroelasticity and invertibility of finite Hilbert fransforms has been known for many years and dates back to Tricomi and his airfoil equation \cite{tricomi}. This approach has been critically used in \cite{bal0,bal4} where the analysis is centered on solvability of integral equations connecting the downwash with the aeroelastic potential. In these works the author performs an analysis on a one dimensional flow-beam system and utilizes a similar Fourier-Laplace approach and corresponding $L_p$ theory. However, the solution to the system (the Possio integral equation) is {\em constructed} via a Fourier-Laplace approach. We follow the same conceptual route with different technical tools. Our approach is based on microlocal analysis---rather than explicit solvers of integral equations arising in a purely one dimensional setting. Though our final estimate depends on an assumption (which we demonstrate only for the one dimensional case) we believe that the microlocal approach provides new ground for extending the flow-structure analysis to the multidimensional settings.
 \end{remark}
 The relationship between $\psi$ and $d$ depends on the properties of the (Mikhlin) multiplier $m(\tau, \eta )$. In fact, as we will show below, $\cT$ is related to the finite Hilbert transform in the $x,y$ variable. We later make this notion precise, but we note at this stage that the integral transform which arises is {\em not standard in the two dimensional scenario}; however, when $\Omega$ is reduced to an interval, the multiplier above contains the symbol of the classical finite Hilbert transform. Define an operator $\mathscr{H}$ on $L_2(\Omega)$ whose symbol is given by $symb(\mathscr H)= \dfrac{-i|\eta_x|}{\eta_x}\equiv j(\eta_x)$, where $\eta_x \leftrightarrow \frac{1}{i}\partial_x$ in the Fourier correspondence. Additionally, let $P_{\Omega}:\reals^1 \to \Omega$ also represent the associated projection into $L_p(\Omega)$ and $E_{\Omega}$ be the extension operator (by zero) from $L_p(\Omega)$ into  $L_p(\realstwo)$.  Then define the operator $\cH_f$ as follows:
 \begin{equation}\label{finhilb}
 \mathscr{H}_f\equiv P_{\Omega}\mathscr{H}E_{\Omega}:L_p(\Omega) \to L_p(\Omega).
 \end{equation}
When $\Omega$ is a disjoint union of open intervals, this operator corresponds to the so called finite Hilbert transform (see Section \ref{hilbert}).

\subsection{Characterization of the Multiplier $m(\tau,\eta)$} 
Recall $D(\tau, \eta) \equiv  (\tau+iU\eta_x)^2+|\eta|^2$, with $\tau = \alpha + i \beta$ ($\alpha>0$ fixed) and $|\eta|^2 = \eta_x^2+\eta_y^2,$ so  
\begin{equation} D(\tau, \eta) = \alpha ^2 - \beta^2 + (1-U^2) \eta_x^2 + \eta_y^2  - 2 U \beta \eta_x 
+ 2 i \alpha (U \eta_x + \beta ).  \end{equation}
 After some calculation and  recalling $ m = - \frac{\sqrt{D}}{ \tau + i U \eta_x}$, we have a representation for $m$ (as given in \eqref{microrelation}):
 \begin{align}\label{symb}  m(\tau, \eta) = j(\eta_x) S(\tau, \eta) 
 \end{align}
 The auxiliary symbol $S$ is given by
 \begin{align}S(\tau,\eta)  \equiv& \dfrac{-i \sqrt{(\frac{\tau}{\eta_x}+iU)^2+1+\left(\frac{\eta_y}{\eta_x}\right)^2}}{\frac{\tau}{\eta_x}+iU},\end{align} and $$j(\eta_x)=-i\dfrac{|\eta_x|}{\eta_x}.$$
 In the analysis of the symbol $m(\tau,\eta)$, we must characterize the singular integral operator corresponding to the symbol $j(\eta_x)$. To the knowledge of the authors, this does not correspond to a well-known problem in dimensions higher than one. For this reason we shall specify the analysis to the case when $\Omega$ is an interval. 
We recall that in one dimension $j(\eta) =\ds \frac{-i|\eta|}{\eta} $ is the symbol associated to the Hilbert transform. 
  
\noindent {\bf \em Assumption:} In what follows, we specialize to the case when $\Omega\subset \reals$ is the interval $(-1,1)$ and the flow resides in the plane $\{(x,y,z): y=0, z\ge0\},$ and demonstrate the necessary estimates (which then becomes a prototype for the two dimensional problem). As such, we now suppress the span variable by restricting to $y=0$, and employ the notation  $\eta = \eta_x $. We note that this is similar to the approach taken to this problem in \cite{bal0,bal4} which reduce  to a two dimensional flow interacting with the one dimensional structure. In this case, we have that the operator $\cH _f:L_p(-1,1) \to L_p(-1,1)$ is the finite Hilbert transform \cite{FH1,FH,FH2} and the Appendix. We will use the fact that the finite Hilbert transform is invertible on $L_{2^-}(-1,1)$. 

\begin{theorem}\label{1dtracereg}
Assume $\Omega=(-1,1)$ (suppressing the span variable $y$). Then the flow trace regularity in Condition \ref{le:FTR0} holds for $\phi$.
\end{theorem}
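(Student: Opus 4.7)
My plan is to reduce the trace regularity to an inversion problem for the finite Hilbert transform on $(-1,1)$ and then to exploit the $(1-x^2)^{-1/2}$ endpoint behavior of its inverse. The remark immediately following the statement of Condition \ref{le:FTR0} shows that $\partial_x \gamma[\phi]$ is already controlled by interior regularity and the tangential trace theorem, so the task reduces to estimating $\gamma[\psi] = \gamma[\phi_t + U\phi_x]$. By superposition I take zero initial data; the contribution of $(\phi_0, \phi_1) \in H^1(\R^3_+) \times L_2(\R^3_+)$ is handled by interior regularity and standard trace theory for the wave equation, producing the $E_{fl}(0)$ term on the right-hand side of \eqref{trace-reg-est-M}. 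With zero initial data, the Fourier--Laplace transform along $\mathrm{Re}\,\tau = \alpha > 0$ in $t$ and in $x$ converts \eqref{microrelation}--\eqref{microBC} into the symbolic identity $\widehat d = P_\Omega \cT E_\Omega \widehat \psi$, with $\cT$ the Fourier multiplier of symbol $m(\tau,\eta) = j(\eta) S(\tau,\eta)$.

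I then exploit the factorization $m = j S$. The factor $j(\eta) = -i|\eta|/\eta$ is the symbol of the one-dimensional Hilbert transform $\cH$; I will verify that the auxiliary symbol $S(\tau,\eta)$ is a bounded Mikhlin multiplier with bounded inverse, uniformly in $\tau \in \alpha + i\R$. This uses crucially the subsonic hypothesis $0 \le U < 1$, which keeps $\sqrt{(\tau/\eta + iU)^2 + 1}$ away from its branch points and from zero. Writing $\cT = \mathcal{S}\cH$ and performing a parametrix-style rearrangement, the Possio equation can be reduced to
\[
\cH_f \widehat\psi = P_\Omega \mathcal{S}^{-1} \widetilde d + R\widehat\psi,
\]
where $\cH_f = P_\Omega \cH E_\Omega$ is the finite Hilbert transform, $\widetilde d$ is an extension of $\widehat d$ to $\R$, and $R$ is a smoothing remainder absorbable by perturbation.

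I next invoke the classical Söhngen--Tricomi inversion of $\cH_f$ on $L_p(-1,1)$ for $1 < p < 2$,
\[
\widehat\psi(x) = -\frac{1}{\pi\sqrt{1-x^2}} \, \mathrm{P.V.}\! \int_{-1}^{1} \frac{\sqrt{1-t^2}}{t-x}\, D(t)\, dt \pmod{(1-x^2)^{-1/2}},
\]
with $D = P_\Omega \mathcal{S}^{-1} \widetilde d$. This yields the pointwise-in-$\tau$ bound $\|\widehat\psi\|_{L_p(-1,1)} \le C_p \|\widehat d\|_{L_2(-1,1)}$ for every $p<2$, with the worst endpoint behavior of $\widehat\psi$ controlled by $(1-x^2)^{-1/2}$. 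The zero-extension to $\R$ of a function with such an endpoint singularity has Fourier transform decaying like $|\xi|^{-1/2}$ (compare the classical identity $\widehat{(1-x^2)^{-1/2}\chi_{(-1,1)}} = \pi J_0$ with $J_0(\xi) = O(|\xi|^{-1/2})$), so $\gamma[\psi] \in H^{-\epsilon}(\R) \hookrightarrow H^{-1/2-\epsilon}(\R)$ pointwise in time. Plancherel on $\alpha + i\R$, together with the uniform symbol bounds from the previous step, then yields \eqref{trace-reg-est-M}.

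The main obstacle is the reduction in the second step: one must justify that the off-diagonal terms arising from the non-locality of $\mathcal{S}$ (i.e. from the discrepancy between $\mathcal{S}$ and its compression $P_\Omega \mathcal{S} E_\Omega$) are smoothing or compact corrections, with bounds uniform in $\tau$. This is precisely where the explicit one-dimensional inversion of $\cH_f$ makes the argument tractable, and where the corresponding multi-dimensional statement --- which would require an invertibility theory for finite Riesz-type transforms on plane domains --- remains open, thereby motivating the blanket Condition \ref{le:FTR0} in the three-dimensional problem.
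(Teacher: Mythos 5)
Your overall strategy --- Fourier--Laplace transform, factoring the symbol $m(\tau,\eta)=j(\eta)S(\tau,\eta)$ into the Hilbert transform symbol $j$ times an auxiliary multiplier $S$, reducing the Possio equation to the finite Hilbert transform $\cH_f$ modulo a smoothing correction, and invoking the Tricomi/Widom $L_p$ inversion theory for $p<2$ --- is exactly the route the paper takes. However, there is a genuine and central error in the multiplier analysis that breaks your argument.

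You assert that $S(\tau,\eta)$ ``is a bounded Mikhlin multiplier with bounded inverse, uniformly in $\tau$.'' This is false, and the paper's Lemma~\ref{mikhlin} says something strictly weaker: it is $S(z_U,\eta)\,(1+\eta^2)^{1/4}$, not $S$ itself, that is bounded away from zero. On the characteristic set $z_U=\beta+U\eta\approx 0$ with $|\eta|$ large, one computes directly from \eqref{7} that $|S|\sim\alpha^{1/2}|\eta|^{-1/2}\to 0$, so $S^{-1}$ grows like $|\eta|^{1/2}$ and the pseudodifferential operator $\mathscr S^{-1}$ loses exactly one half derivative (Corollary~\ref{Sinv}). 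This loss is the real source of the $H^{-1/2-\epsilon}$ in Condition~\ref{le:FTR0}; you instead attribute the half-derivative loss to the endpoint behavior $(1-x^2)^{-1/2}$ of the Tricomi inverse, but that endpoint singularity only prevents membership in $L_2$ and degrades things by $\epsilon$ (it puts the extension into $L_p$ for $p<2$, hence $H^{-\epsilon}$, consistent with $J_0(\xi)=O(|\xi|^{-1/2})$ since $\int |\xi|^{-1}(1+|\xi|^2)^{-\epsilon}\,d\xi<\infty$). Consequently you conclude $\gamma[\psi]\in H^{-\epsilon}$, which is strictly stronger than what is true, and the apparent $H^{-1/2-\epsilon}$ result is obtained only by embedding --- which masks the error rather than fixing it.

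The mistake also poisons your parametrix rearrangement $\cH_f\widehat\psi=P_\Omega\mathcal S^{-1}\widetilde d+R\widehat\psi$: since $\mathcal S^{-1}$ is not bounded on $L_2$, for $\widetilde d\in L_2$ the right-hand side lands only in $H^{-1/2-\epsilon}$, which is outside the $L_p$, $p>1$, framework in which the finite Hilbert transform inversion theory lives. The paper avoids this by inverting in the opposite order: it writes $d=\cH_f(P_\Omega\mathscr S\psi)+V\mathscr S\psi$ with $V=P_\Omega\mathscr H(I-E_\Omega P_\Omega)$ pseudolocal, inverts the compactly perturbed finite Hilbert transform on $L_2(0,T;L_p(\Omega))$, $p<2$, to conclude $\mathscr S\psi\in L_2(0,T;H^{-\epsilon})$ by Sobolev embedding, and only \emph{then} applies $\mathscr S^{-1}$ to obtain $\psi\in L_2(0,T;H^{-1/2-\epsilon})$. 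Your proposal can be repaired by adopting this ordering and replacing the incorrect claim of a bounded $S^{-1}$ by the quantitative lower bound $|S|\gtrsim(1+\eta^2)^{-1/4}$, but as written the key symbol estimate is wrong and the deduced regularity for $\gamma[\psi]$ is not justified.
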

\begin{proof}
In line with hyperbolic character of the symbol, it is also convenient to introduce the variable 
$ z_U\equiv \beta+U\eta$.
With this notation we simplify (choosing the positive square root)
\begin{equation}\label{S} S (z_U, \eta) = -i\eta\dfrac{\sqrt{\frac{1}{\eta^2}\left[(\alpha+iz_U)^2+\eta^2\right]}}{\alpha+iz_U}\end{equation}
We are interested in the invertibility (and regularity properties of the inverse) of the operator associated to the multiplier $$m(z_U,\eta)=\dfrac{-i|\eta|}{\eta}S(z_U,\eta).$$ Properties of the inverse finite Hilbert transform are well-understood (see \cite{FH,FH1,FH2} and the Appendix). 

\begin{lemma}\label{mikhlin} The multiplier $S(z_U,\eta)(1+\eta^2)^{1/4}$ is bounded from below for all $\eta,z_U \in \reals$. \end{lemma}
 \begin{proof} To unify our approach with that in \cite{supersonic}, we show that  $M(z_U,\eta)\equiv [S(z_U,\eta)[1+\eta^2]^{1/4}]^{-1}$. 
  is bounded from above for all $\eta,~z_U \in \reals$. 
  
%  \noindent{\bf Irena: What I have done here is bounded the inverse from above, which is equivalent to what we directly need; I have done this so it matches our analysis in the Supersonic paper. See the final estimate midway through on page 23.}
 \begin{align}
|M(z_U,\eta)|=&\dfrac{1}{(1+\eta^2)^{1/4}}\left|\dfrac{\alpha+iz_U}{\sqrt{(\alpha^2-z_U^2+\eta^2)+2i\alpha z_U}} \right|\\\label{7}
=&\dfrac{1}{(1+\eta^2)^{1/4}}\dfrac{\sqrt{\alpha^2+z_U^2}}{\left((\alpha^2-z_U^2+\eta^2)^2+4\alpha^2z_U^2\right)^{1/4}}\\\label{8}
=&\dfrac{1}{(1+\eta^2)^{1/4}}\dfrac{\sqrt{\alpha^2+z_U^2}}{\left((\eta^2-z_U^2)^2+2\alpha^2\eta^2+\alpha^4+2\alpha^2z_U^2\right)^{1/4}}
\end{align} We analyze the symbol via cases: $$\begin{cases}\text{A:} & 0 \le |z_U| < \frac{1}{2}|\eta| \\ \text{B:} &\frac{1}{2}|\eta| \le |z_U| \le 2|\eta| \\ \text{C:} & |z_U|>2|\eta| \end{cases}.$$ First, in Case A, we note that the quantity $\eta^2-z_U^2>0$, and hence, directly from \eqref{8} (discarding terms) we have
\begin{equation}
|M(z_U,\eta)| \le \dfrac{1}{(1+\eta^2)^{1/4}}\dfrac{\sqrt{\alpha^2+\frac{1}{4}\eta^2}}{\left(\frac{9}{16}\eta^4+\alpha^4\right)^{1/4}}<\left[ \dfrac{2}{1+\eta^2}\right]^{1/4},
\end{equation}  for all $\eta$ and $z_U$.

In Case C, we utilize the characterizing bound in the principal term to arrive at
\begin{equation}
|M(z_U,\eta)| \le \dfrac{1}{(1+\eta^2)^{1/4}}\dfrac{\sqrt{\alpha^2+z_U^2}}{\left(\frac{1}{16}z_U^4+\alpha^4\right)^{1/4}}<\left[ \dfrac{81}{1+\eta^2}\right]^{1/4},
\end{equation} for all $\eta$ and $z_U$.

In Case B the principal portion of the denominator can degenerate, indicating the need for the $[1+\eta^2]^{1/4}$ term incorporated into the symbol $M$:
\begin{align}
|M(z_U,\eta)|\le &\dfrac{1}{(1+\eta^2)^{1/4}} \dfrac{\sqrt{\alpha^2+4\eta^2}}{\left[\frac{5}{2}\alpha^2\eta^2+\alpha^4\right]^{1/4}}\le [5+8\alpha^2]^{1/4}.
\end{align}
Hence, there exists some $C>0$ such that $M(z_U,\eta)$ is bounded as follows
$$|M(z_U,\eta)| \le C\left[1+\alpha^2+\frac{1}{1+\eta^2}\right]^{1/4}.$$ Returning to the symbol $S(\tau,\eta)$, we have that $$[1+\eta^2]^{1/4}|S(\tau,\eta)| >c\left[\dfrac{1+\eta^2}{(1+\eta^2)(1+\alpha^2)+1} \right]^{1/4}.$$\end{proof}

 Hence, we have by the analysis above (taking the pseudodifferential operator $\mathscr{S}$ to correspond to the auxiliary symbol $S(\beta,\eta)$): \begin{corollary}\label{Sinv}
 $$ \mathscr{S}^{-1}: L_2(0,T; H^{-\epsilon}  (\reals) )   \rightarrow L_2(0,T; H^{-1/2 - \epsilon}(\reals))  , \epsilon > 0 $$ is bounded.
 %modulo smoothing/compactifying terms.
  \end{corollary} \noindent  (We note the familiar loss of $1/2 $ derivative \cite{supersonic}.)
 At this stage that the integral transform which arises corresponding to the so called  abstract version of Possio equation \cite{bal0,bal4}  
$$d=P_{\Omega}\mathscr T \psi,~\text{on}~ \Omega , t > 0 ,$$
We recall that    $\psi =0$ off $\Omega$, so $E_{\Omega}\psi $ coincides in this case with $\psi$.
Given $d \in C(0, T; L_2(\Omega))$ our task is to infer the regularity of $ \psi $. 
This precise formulation is the one given in \cite{bal0}, where the connection with finite Hilbert transform has been used.

Relating the Hilbert transform $\mathscr H$ to the finite Hilbert transform $\mathscr H_f$ described above, one obtains 
\begin{align}\label{possio}
d =& P_{\Omega}\cT \psi 
=  P_{\Omega} \mathscr{H} \mathscr{S}  \psi\\
=& P_{\Omega} \mathscr{H} E_{\Omega} P_{\Omega } \mathscr{S} \psi \nonumber +P_{\Omega} \mathscr{H} [ I - E_{\Omega} P_{\Omega } ]  \mathscr{S}  \psi \nonumber \\
 =& \mathscr{H}_f P_{\Omega} \mathscr{S} \psi + V  \mathscr{S} \psi
 \end{align}
% The operator $G$ encompasses the terms associated to the factorization: namely $G\equiv \cT -\cH\mathscr S$;

 where  $V\mathscr{S} \psi =[ I - E_{\Omega} P_{\Omega} ] \mathscr{S} \psi$. 
 Since the singular support of $V\mathscr{S} \psi $ is empty,
 by the pseudolocal properties of pseudodifferential operators,  the operator $V$ is ``smooth " and compact (see  \cite{bal0,bal4} for detailed calculations with a similar decomposition). The operator $\mathscr{H}_f P_{\Omega} + V $  is therefore a compact perturbation of Finite Hilbert Transform, injective \cite{bal0}, hence  invertible on  $L_2(0, T; L_p(\Omega))$ with $ p < 2$ (by the invertibility properties of the finite Hilbert transform---see the Appendix). 
  
With $d \in L_2(0,T; L_2(\Omega))$, inverting,  we have $\mathscr{S} \psi \in L_2(0, T; L_{p}(\reals))$ with $p < 2$, which yields via the Sobolev embedding $$H^{n\delta/(2+\delta)}(\reals^n) \hookrightarrow L_{2-\delta}(\reals^n)$$ that$ \mathscr{S} \psi \in L_2(0, T;  
  H^{-\epsilon} (\reals) )  $ for every $\epsilon> 0$ by taking a suitable $ p < 2 $. Thus, utilizing Corollary \ref{Sinv},  $\psi \in L_2(0,T; H^{-1/2-\epsilon}(\Omega))$, as desired, which concludes the proof of Theorem \ref{1dtracereg}. 
 \end{proof}
 
\begin{remark}
Noting that the Sobolev embedding we used is not critically affected by  the dimension of $\Omega$, we have thus provided the main motivation for Condition \ref{le:FTR0}. What is missing however, is the analog of theory of finite Hilbert Transforms carried at the two dimensional level.

\end{remark}
\begin{remark}\label{Bal1}
We note that the same result (essentially) follows from the analysis in \cite{bal0,bal4}, where the author proves that 
aeroelastic potential $ \psi \in L_2(0, T , L_q(\Omega) ) $ for $ q < 4/3 $ . Since  for $p > 4 $ there exists $\epsilon > 0 $ 
such that $$H^{1/2 + \epsilon} (\Omega) 
\subset L_p(\Omega) , ~~p > 4 , ~~\text{dim}~~\Omega \leq 2,$$  and one then obtains  that $L_q(\Omega) \subset H^{-1/2 - \epsilon}(\Omega) $ with $q < 4/3 $. \end{remark}
\begin{remark}
The loss of $1/2$ derivative in the characteristic region was already observed and used in the analysis of regularity of 
the aeroelastic potential for the Neumann problem with supersonic velocities $U$ \cite{supersonic}. 
However, in the case of K-J conditions there is an additional loss, due to the necessity of inverting finite Hilbert transform. 
\end{remark}

We now bring our attention back to the case when $\Omega$ is a two dimensional domain. The analysis above can be performed analogously for $S(\tau,\eta)$ when $\eta=(\eta_x,\eta_y)$; however, as we see from \eqref{symb}, we must have invertibility estimates on the operator associated to the symbol $\dfrac{i|\eta|}{\eta}$ in a vectorial setting for $\eta$ and  for a two dimensional domain. This corresponds to a  finite Riesz  transform {\em in the $x$ direction.} The trace regularity analysis (as done above) will depend on solutions to singular integral equations which to the authors' knowledge are not readily available.  Such results, should they exist, will depend highly on the geometry of domains. As such, the corresponding trace regularity result for $\Omega$ a two dimensional domain {\em as in \eqref{trace-reg-est-M0}} will also depend on the geometry; ultimately, this assumption will be verified if properties of the finite Hilbert transform carry over to the higher dimensional transforms mentioned previously. We now state this as a lemma:
\begin{lemma}
Assume that the operator $\mathscr{H}_f\equiv P_{\Omega}\mathscr{H}E_{\Omega}:L_p(\Omega) \to L_p(\Omega)$ is continuously invertible for $p \in (1,2)$. Then the trace regularity condition Condition \ref{le:FTR0} holds.
\end{lemma}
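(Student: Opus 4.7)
The plan is to mirror the proof of Theorem \ref{1dtracereg} almost verbatim, with the classical finite Hilbert transform replaced throughout by the two-dimensional operator $\mathscr{H}_f$ whose invertibility on $L_p(\Omega)$ for $p\in(1,2)$ is now assumed. The Fourier--Laplace derivation of the relation $\Dz\hat\phi = m(\tau,\eta)\hat\psi$ with the factorized symbol $m(\tau,\eta)=j(\eta_x)S(\tau,\eta)$ is unchanged from the setup in \eqref{microrelation}--\eqref{symb}, since only $\eta_x$ enters the singular factor $j$ and $S$ depends symmetrically on $|\eta|^2=\eta_x^2+\eta_y^2$.

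First I would verify the two-dimensional analog of Lemma \ref{mikhlin}: the multiplier $S(\tau,\eta)(1+|\eta|^2)^{1/4}$ is bounded below uniformly on $\reals_+\times\reals^2$. The three-case analysis on $|z_U|$ versus $|\eta|$ carries through with $|\eta|$ now denoting the two-dimensional norm; this yields, by Mikhlin's theorem, the boundedness
\[
\mathscr{S}^{-1}\,:\, L_2\bigl(0,T;H^{-\epsilon}(\reals^2)\bigr)\longrightarrow L_2\bigl(0,T;H^{-1/2-\epsilon}(\reals^2)\bigr),\qquad \epsilon>0,
\]
exactly as in Corollary \ref{Sinv}. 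Next, writing $\mathscr{T}=\mathscr{H}\mathscr{S}$ and inserting $I=E_\Omega P_\Omega+(I-E_\Omega P_\Omega)$ in front of $\mathscr{S}\psi$, the Possio-type relation becomes
\[
d = P_\Omega\mathscr{T}\psi = \mathscr{H}_f\,P_\Omega\mathscr{S}\psi + V\mathscr{S}\psi,\qquad V\equiv P_\Omega \mathscr{H}(I-E_\Omega P_\Omega).
\]
The operator $V$ is smoothing on $\Omega$ by the pseudolocal property (its integral kernel is smooth away from the diagonal, and the truncation by $I-E_\Omega P_\Omega$ removes the singular support), hence compact on $L_p(\Omega)$. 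Under our hypothesis, $\mathscr{H}_f$ is invertible on $L_p(\Omega)$ for $p\in(1,2)$; combined with the injectivity of $\mathscr{H}_f+V$ (inherited from the uniqueness of the $(\phi,\psi)$ that solves the Zaremba-type static problem in the proof of $m$-dissipativity of $\bA_0$), the Fredholm alternative yields a bounded inverse of $\mathscr{H}_f+V$ on $L_p(\Omega)$.

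To close the argument I would feed the downwash $d=\Dn\phi\in L_2(0,T;L_2(\Omega))\subset L_2(0,T;L_p(\Omega))$ through this inverse to obtain $\mathscr{S}\psi\in L_2(0,T;L_p(\reals^2))$ with $p<2$. Invoking the Sobolev embedding $H^{n\delta/(2+\delta)}(\reals^n)\hookrightarrow L_{2-\delta}(\reals^n)$ with $n=2$ (which, as noted in the text, is essentially dimension-insensitive), for any $\epsilon>0$ a suitable choice of $p<2$ gives $\mathscr{S}\psi\in L_2(0,T;H^{-\epsilon}(\reals^2))$; applying the mapping property of $\mathscr{S}^{-1}$ derived above then produces $\gamma[\psi]\in L_2(0,T;H^{-1/2-\epsilon}(\Omega))$ with an estimate of the form \eqref{trace-reg-est-M0}, via the energy bound $\|\mathscr{S}\psi\|\lesssim E_{fl}(0)+\int_0^T\|\Dn\phi\|_\Omega^2\,dt$ that accompanies the resolution of \eqref{flow1} (this last estimate is already embedded in the solvability of $\bA_0$). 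The main obstacle I anticipate is not any of the hard analysis, since the hypothesis does the heavy lifting on the singular-integral side, but rather the bookkeeping needed to justify the Fredholm step—specifically, confirming that the injectivity of $\mathscr{H}_f+V$ on $L_p(\Omega)$ can be extracted from the uniqueness of the flow $(\phi,\psi)$, and that the truncation operator $V$ is genuinely compact (not merely bounded) between the relevant $L_p$ spaces in the two-dimensional geometry. Once these points are settled, the remainder of the argument is a direct transcription of the one-dimensional proof.
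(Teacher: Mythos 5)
Your proposal matches the paper's intended argument exactly: the paper states this lemma without writing out a proof, remarking only that its hypothesis ``is a generalization of the invertibility properties of the finite Hilbert transform which were critical in the proof of Theorem \ref{1dtracereg}''---i.e., the intended proof is precisely the transcription of the one-dimensional argument that you carry out, replacing the Tricomi/finite-Hilbert-transform invertibility (known in 1D) by the assumed invertibility of $\mathscr{H}_f$ in 2D. The one place you deviate slightly is in sourcing the injectivity of $\mathscr{H}_f P_\Omega + V$: the one-dimensional proof cites \cite{bal0} for this, whereas you propose to extract it from uniqueness of the static $(\phi,\psi)$ Zaremba problem; you correctly flag this as the point most in need of care, and that caution is warranted, since the $L_2(\realsthree_+)$-uniqueness of $\psi$ does not immediately control its boundary trace in $L_p(\Omega)$.
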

As discussed above and in the Appendix, the hypothesis of this lemma is a generalization of the invertibility properties of the finite Hilbert transform which were critical in the proof of Theorem \ref{1dtracereg} above.

\section{Physical Considerations}\label{physicals}
\subsection{Nonlinearities}
We briefly mention the three principal nonlinearities associated to the flow-plate interaction model presented above, appearing in the plate equation, i.e. $u_{tt}+\Delta^2u+f(u) = p(\xb,t).$
In the standard analyses \cite{springer,webster,supersonic} we consider a general situation that
covers nonlinear (cubic-type) force terms $f(u)$ resulting from  aeroelasticity  modeling \cite{bolotin,dowell,dowell1}.
These include:
\begin{enumerate}
  \item {\sl Kirchhoff model}: $u\mapsto f(u)$ is the Nemytski operator
with a function $f\in {\rm Lip_{loc}}(\R)$ which fulfills the  condition
\begin{equation}
    \label{phi_condition}
    \underset{|s|\to\infty}{\liminf}{\frac{f(s)}{s}}>-\la_1,
\end{equation}  where $\la_1$ is the first eigenvalue of the
biharmonic operator with homogeneous Dirichlet boundary conditions.
  \item {\sl Von Karman model:} $f(u)=-[u, v(u)+F_0]$, where $F_0$ is a given function
  from $H^4(\Om)$ and
the von Karman bracket $[u,v]$  is given by
\begin{equation*}%\label{bracket}
[u,v] = \partial ^{2}_{x} u\cdot \partial ^{2}_y v +
\partial ^{2}_y u\cdot \partial ^{2}_{x} v -
2\cdot \partial ^{2}_{xy} u\cdot \partial ^{2}_{xy}
v,
\end{equation*} and
the Airy stress function $v(u) $ solves the following  elliptic
problem
\begin{equation}\label{airy-1}
\Delta^2 v(u)+[u,u] =0 ~~{\rm in}~~  \Omega,\quad \Dn v(u) = v(u) =0 ~~{\rm on}~~  \pd\Om.
\end{equation}
Von  Karman equations are well known in nonlinear elasticity and
constitute a basic model describing nonlinear oscillations of a
plate accounting for  large displacements, see \cite{karman}
and also \cite{springer,ciarlet}  and
references therein.
  \item {\sl Berger Model:} In this case the feedback force $f$ has the form
  $$
  f(u)=\left[ \kappa \int_\Om |\nabla u|^2 dx-\Gamma\right] \Delta u,
$$
where $\kappa>0$ and $\Gamma\in\R$ are parameters, for some details and  references see \cite{berger0} and also
\cite[Chap.4]{Chueshov}.
\end{enumerate}

The following proposition allows us to incorporate the above nonlinearities into our theory of well-posedness as in \cite{webster,jadea12,supersonic}. For this reason we do not elaborate further on the nonlinear analysis, as the key issues arising in the study of this model which stand apart from the aforementioned analyses occur in the {\em linear theory}.

\begin{proposition}\label{abstractnonlin}
For each of the nonlinearities $f$ above, we have that $f$ is locally Lipschitz from
 $H^2_0(\Omega)$ into $L_2(\Om)$ and there exists
$C^1$-functional $\Pi(u)$ on  $H^2_0(\Omega)$ such that
$f$ is a Fr\'echet derivative of $\Pi(u)$,
$f(u)=\Pi'(u)$. Moreover, $\Pi(u)$
 is  locally bounded on  $H^2_0(\Omega)$, for all $\delta$ sufficiently small
 there exists a $C_{\delta}\ge 0$ such that
\begin{equation}\label{potentialbound}
0 \le \delta \|\Delta u\|_\Om^2 +\Pi(u)+C_{\delta}\;,\quad \forall\, u\in  H^2_0(\Omega)\;.
\end{equation}
\end{proposition}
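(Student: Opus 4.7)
The plan is to verify the three required properties (local Lipschitz continuity from $H_0^2(\Omega)$ to $L_2(\Omega)$, existence of a $C^1$ potential $\Pi$ with $\Pi'=f$, and the coercivity-type bound \eqref{potentialbound}) separately for each of the three nonlinearities, since their algebraic structure is quite different. In each case I would freely use the 2-D Sobolev embeddings $H_0^2(\Omega)\hookrightarrow C(\overline{\Omega})\cap W^{1,p}(\Omega)$ (all $p<\infty$) and elliptic regularity for the biharmonic operator.

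For the \emph{Kirchhoff} case, local Lipschitz continuity of the Nemytski map follows from $f\in\mathrm{Lip}_{\mathrm{loc}}(\mathbb{R})$ combined with the embedding $H_0^2(\Omega)\hookrightarrow L_\infty(\Omega)$. I would take $\Pi(u)=\int_\Omega F(u)\,dx$ where $F(s)=\int_0^s f(r)\,dr$; $C^1$ regularity and $\Pi'=f$ follow by dominated convergence. The key bound \eqref{potentialbound} uses the $\liminf$ hypothesis: for any $\epsilon>0$ small there is $C_\epsilon$ with $F(s)\ge -\tfrac12(\lambda_1-\epsilon)s^2-C_\epsilon$, so $\Pi(u)\ge -\tfrac12(\lambda_1-\epsilon)\|u\|^2-C_\epsilon|\Omega|$. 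The variational characterization $\lambda_1\|u\|^2\le\|\Delta u\|^2$ on $H_0^2(\Omega)$ then yields $-\Pi(u)\le\tfrac{\lambda_1-\epsilon}{2\lambda_1}\|\Delta u\|^2+C_\epsilon'$, giving the estimate.

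For the \emph{von Karman} case, I would first record the standard continuity of the bracket, $\|[u,w]\|_{L_2}\le C\|u\|_{H^2}\|w\|_{H^2}$ (in 2-D), and use elliptic regularity applied to \eqref{airy-1} to get the bound $\|v(u)\|_{H^4}\le C\|u\|_{H^2}^2$ together with local Lipschitz dependence $u\mapsto v(u)$. Combining yields local Lipschitz continuity of $f(u)=-[u,v(u)+F_0]$. The natural potential is
\begin{equation*}
\Pi(u)=\tfrac14\|\Delta v(u)\|^2-\tfrac12\langle F_0,[u,u]\rangle,
\end{equation*}
whose Fréchet derivative is $f$ by a standard computation that uses the cubic symmetry of $([u,v],w)$ after integration by parts. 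The bound \eqref{potentialbound} follows because the first term is nonnegative while $|\langle F_0,[u,u]\rangle|=|\langle\Delta^2 v(F_0),\,u\rangle|$-type identities, or simply direct bracket estimates, give $|\langle F_0,[u,u]\rangle|\le C\|F_0\|_{H^2}\|u\|_{H^2}^2$, which can be absorbed by $\delta\|\Delta u\|^2+C_\delta$ via Young's inequality.

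The \emph{Berger} case is the softest: local Lipschitz continuity of $f(u)=(\kappa\|\nabla u\|^2-\Gamma)\Delta u$ follows from the product/chain estimate on bounded $H_0^2$-balls, and the potential is simply $\Pi(u)=\tfrac{\kappa}{4}\|\nabla u\|^4-\tfrac{\Gamma}{2}\|\nabla u\|^2$, whose Fréchet derivative is $f$ by direct calculation. Setting $x=\|\nabla u\|^2\ge 0$ gives $\Pi(u)=\tfrac\kappa4 x^2-\tfrac\Gamma2 x\ge -\Gamma^2/(4\kappa)$, so \eqref{potentialbound} holds for \emph{every} $\delta>0$ with a uniform constant. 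I expect the Kirchhoff coercivity step to be the main obstacle: because the hypothesis $\liminf f(s)/s>-\lambda_1$ is exactly sharp, one must use the variational characterization of $\lambda_1$ carefully, choosing $\epsilon$ and $\delta$ in coordination to verify that the required inequality holds for all sufficiently small $\delta$.
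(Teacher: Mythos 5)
The paper itself gives no proof of this proposition; it is stated as a summary of standard facts and the authors defer to the cited monographs (\cite{springer,Chueshov}) and earlier papers (\cite{webster,jadea12,supersonic}). So your sketch is being compared against the literature rather than against an argument in this manuscript. With that caveat, the overall architecture you use — Nemytski/primitive for Kirchhoff, the quartic Airy-type potential for von Karman, the explicit quartic in $\|\nabla u\|$ for Berger — is exactly the standard one, and your Berger case is complete and correct. The remaining two cases, however, each have a genuine gap.

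For the von Karman case the Lipschitz argument rests on the estimate $\|[u,w]\|_{L_2}\le C\|u\|_{H^2}\|w\|_{H^2}$, which is \emph{false} in two dimensions: a product of two $L_2$ second derivatives is only in $L_1$. What is true — and what the cited books prove via compensated compactness — is $\|[u,w]\|_{H^{-1-\epsilon}}\le C\|u\|_{H^2}\|w\|_{H^2}$, which by elliptic regularity gives $v(u)\in H^{3-\epsilon}$ (not $H^4$), after which one still needs a nontrivial lemma to conclude $\|[u,v(u)]\|_{L_2}\le C\|u\|_{H^2}^3$; see \cite{springer,fhlt}. Your bound \eqref{potentialbound} for von Karman is also obtained the wrong way: $|\langle F_0,[u,u]\rangle|\le C\|u\|_{H^2}^2$ cannot be absorbed by $\delta\|\Delta u\|^2$ for $\delta$ small. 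The correct route is to write $\langle F_0,[u,u]\rangle=-\langle F_0,\Delta^2 v(u)\rangle=-\langle\Delta F_0,\Delta v(u)\rangle$ and absorb it by Young's inequality into the \emph{first} term $\tfrac14\|\Delta v(u)\|^2$ of $\Pi$, which gives $\Pi(u)\ge -C$ outright — so \eqref{potentialbound} in fact holds for every $\delta\ge0$ in that case.

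For the Kirchhoff case your computation is correct up to the final sentence, and that final sentence is where the argument fails: no choice of $\epsilon$ and $\delta$ can make the inequality hold for all \emph{small} $\delta$. The $\liminf$ hypothesis gives a fixed $\epsilon_0>0$ with $F(s)\ge -\tfrac12(\lambda_1-\epsilon_0)s^2-C$, whence $-\Pi(u)\le\tfrac{\lambda_1-\epsilon_0}{2\lambda_1}\|\Delta u\|^2+C'$; this only yields \eqref{potentialbound} for $\delta\ge\tfrac{\lambda_1-\epsilon_0}{2\lambda_1}$, a number strictly less than $\tfrac12$ but bounded away from zero (take $f(s)=-\tfrac{\lambda_1}{2}s$ and test on the first clamped eigenfunction to see the bound genuinely fails for small $\delta$). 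What the proposition really needs — and what the cited sources actually supply — is that there \emph{exists} $\delta<\tfrac12$ for which \eqref{potentialbound} holds, which is enough to bound the total plate energy from below. You should either read the statement in that weaker sense or flag the discrepancy explicitly rather than assert the small-$\delta$ claim can be verified by ``coordinating $\epsilon$ and $\delta$.''
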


\subsection{Other Configurations with Free Boundary Conditions}\label{physical1}
The configuration considered in this treatment represents an attempt to understand PDE aspects of the dynamic, mixed K-J conditions. To do so, we have taken clamped plate boundary conditions. However, in recent discussions with E. Dowell (Duke) \cite{corres,K1,K2,K4}, the authors have come to understand that  the {\em free-clamped} configuration represents a model of great recent interest. In addition, it is perhaps the most mathematically interesting (and difficult) case corresponding to this class of flow-plate models. These configurations are extremely important in the 
modeling of airfoils and in the modeling of panels in which some component of the boundary is left free.
In addition to K-J condition, one must contend with the difficulties associated with the free plate boundary condition.
The applicability of K-J boundary condition is highly dependent upon the geometry of the plate in question.

 The configuration below represents an attempt to model oscillations of a plate which is {\em mostly free}. The dynamic nature of the flow conditions correspond to the fact that the interaction of the plate and flow is no longer static along the free edge(s), and in this case the implementation of the K-J condition is called for. This yields the following boundary conditions for the flow-plate system:
\begin{equation}\label{physical}\begin{cases}
u=\Dn u = 0,  &\text{ on }  \pd\Om_1\times (0,T) \\
\mathcal{B}_1 u =0,   \mathcal{B}_2 u =0,~ &\text{on}~ \pd\Om_2\times (0,T)\\
\Dn \phi = - (\partial_t+U\partial_x)u, ~& \text{on}~\Omega\times (0,T)\\
\Dn \phi= 0, & ~\text{on}~\Theta_1\times(0,T)\\
\psi=\phi_t+U\phi_x=0, &~ \text{on}~\Theta_2\times(0,T)\end{cases}
\end{equation}
where 
\begin{eqnarray}  \label{free}
\mathcal{B}_1 u \equiv \Delta u + (1-\mu) B_1 u, 
\notag \\
\mathcal{B}_2 u \equiv \partial_{\nu} \Delta u + (1-\mu) B_2 u - \mu_1 u -D(u_t)= 0 ~~\mathrm{on}~~ \Gamma_1,  \notag 
\end{eqnarray}
and we have partitioned the boundary $\partial \Omega = \partial\Omega_1 \sqcup \partial\Omega_2$. The boundary operators $B_1$ and $B_2$ are given by \cite{lagnese}:
\begin{equation*}
\begin{array}{c}
B_1u = 2 \nu_1\nu_2 u_{xy} - \nu_1^2 u_{yy} - \nu_2^2 u_{xx}=-\partial_{\tau}^2u-\nabla \cdot \nu(\xb)\Dn u\;, \\
\\
B_2u = \partial_{\tau} \left[ \left( \nu_1^2 -\nu_2^2\right) u_{xy} + \nu_1
\nu_2 \left( u_{yy} - u_{xx}\right)\right]\,=\partial_{\tau}\partial_{\nu}\partial_{\tau}u,%
\end{array}%
\end{equation*}
where $\nu=(\nu_1, \nu_2)$ is the outer normal to $\Gamma$, $\tau= (-\nu_2,
\nu_1)$ is the unit tangent vector along $\partial\Omega$. The parameter $\mu_1$
is nonnegative; the constant $0<\mu<1$ has the meaning of the
Poisson modulus. The abstract boundary damping is encapsulated in the term $D(\cdot)$. 
The regions  are described by the picture below.

\includegraphics[scale=.24]{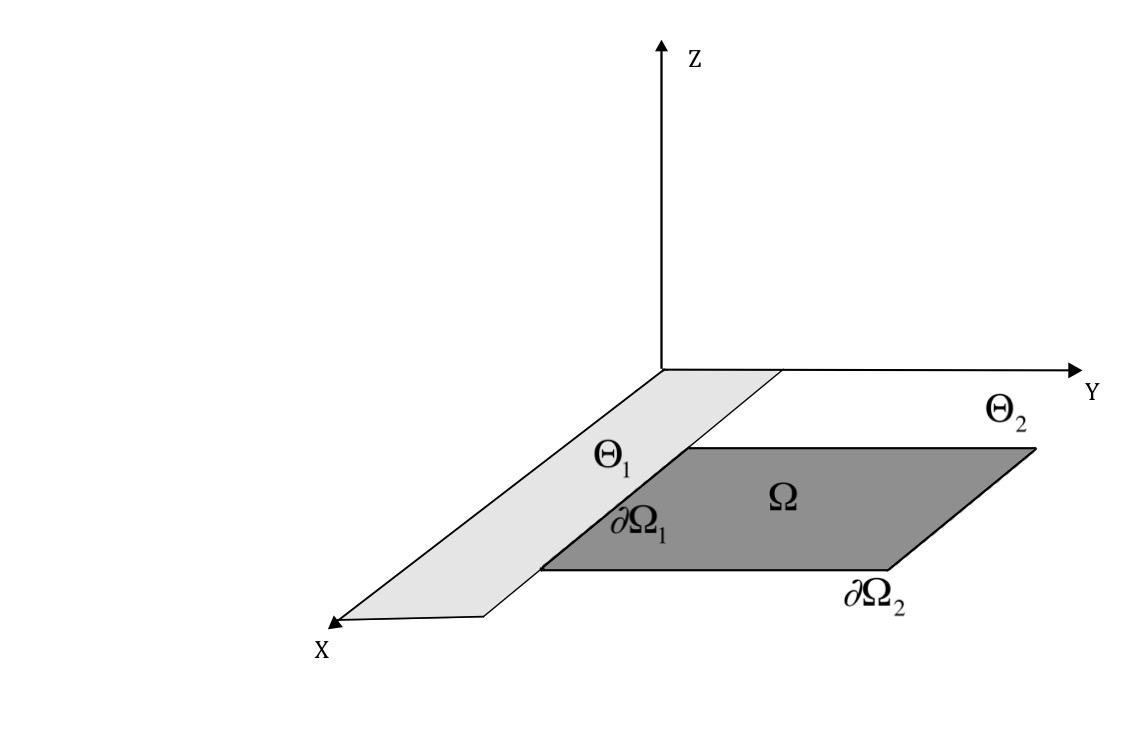}

\begin{remark}
The configuration above arises in the study of airfoils, but another related configuration referred to as {\em axial flow} takes the flow to occur in the $y$ direction in the picture above. In our analysis, the geometry of the plate (and hence the orientation of the flow) do not play a central role. In practice, the orientation can have a dramatic effect on the occurrence and magnitude of the oscillations associated with the flow-structure coupling. In the case of axial flow, the above configuration is often discussed in the context of {\em flag flutter} or flapping. See \cite{K4} for more details. 
\end{remark}

The physical nature of the models given by the boundary conditions in \eqref{physical} makes their analysis desirable; however, such models involve a high degree of mathematical complexity due to the dynamic and mixed nature of the boundary coupling near the flow-plate interface. From the point of view of the existing analysis, much of the well-posedness and long-time behavior analysis is contingent upon taking Clamped boundary conditions assumed for the plate; these allow for smooth extensions to $\R^2$ of the Neumann flow boundary conditions
satisfied by the flow. In the absence of these, one needs to approximate the original dynamics in order to construct
 sufficiently smooth functions amenable to PDE calculations.  This is a technical challenge and was carried out in a similar fashion in \cite{supersonic}, although the need for this analysis was not due to plate boundary conditions.  
 
 \section{Acknowledgements} 
The authors  are  grateful  to  Prof. A.V. Balakrishnan for  a longtime   sharing of  his pioneering results in the field  of continuum aeroleasticity. Additionally the authors are grateful to Prof. E. Dowell for very constructive and informative ongoing discussion regarding flow-structure interaction models and configurations of recent interest.

The research conducted by Irena Lasiecka was supported by the grants NSF-DMS-0606682 and AFOSR-FA99550-9-1-0459.

\section{Appendix: Finite Hilbert Transform}\label{hilbert}
In this appendix we present results on the finite Hilbert transform which are critical to the analysis in \cite{bal0,bal4} and to our analysis when $\Omega$ is an interval. Our references are \cite{FH, FH1,FH2}. The results provide the motivation for assumptions concerning the additional trace regularity found in Condition \ref{le:FTR0}, and hence are apropos to the invertibility of the higher dimensional Hilbert-like singular integral transform. 

We consider the case where $\Omega=(-1,1)$. For $f\in L_p(-1,1)$, define the finite Hilbert Transform to be:
$$g(x)=\cT(f)=\frac{1}{\pi}PV\int_{-1}^1\dfrac{f(y)}{x-y} dy,$$ which is $\mathscr{L}\left(L_p(\Omega)\right)$, $p \in (1,\infty)$.
We are concerned with the inversion formula, as given in Tricomi \cite{tricomi}:
$$f(x)=-\frac{1}{\pi}PV \int_{-1}^1 \sqrt{\dfrac{1-y^2}{1-x^2}}\dfrac{g(y)}{y-x}dy+\dfrac{C}{\sqrt{1-x^2}},$$ noting that the null space of $\cT$ is captured by the last term above. 

Using Tricomi's algebraic identity:
$$\sqrt{\dfrac{1-y^2}{1-x^2}}\dfrac{1}{y-x}=\dfrac{1}{x-y}+\dfrac{x+y}{\sqrt{1-x^2}\big(\sqrt{1-x^2}+\sqrt{1-y^2}},$$
we rewrite the inversion formula as
$$f(x)=-\frac{1}{\pi}\dfrac{1}{\sqrt{1-x^2}}\int_{-1}^1\dfrac{(x+y)}{\sqrt{1-x^2}+\sqrt{1-y^2}}g(y)dy+\cT(g)+\dfrac{C}{\sqrt{1-x^2}}.$$

With regard to \cite{tricomi}, we have a straightforward argument (from the inversion formula) which yields the {\em lowest possible integrability} of $g$ in order for the inversion formula to be valid. This yields a certain type of {\em optimality} for the inversion:
For $g \in L_{(4/3)^+}$, we get $f \in L_{(4/3)^-}.$

In addition, the following more modern theorem is valid \cite{FH2}:

\begin{theorem}\label{invhilb} Consider the finite Hilbert transform on the unit interval $\Omega = (-1,1)$ 

$H: L_p(\Omega) \to L_p(\Omega)$. Then:
\begin{enumerate}
\item For any $ p \in (1,2) $ the map $H$ is Fredholm of index $1$ and the psedoinverse $H^{\# } $ is bounded on $ L_p(\Omega) $. 
\item For any $p \in (2, \infty ) $ the map $H$ is injective and Fredholm of index $-1$. Thus, the inverse is bounded on its range, where $$\mathscr{R}(H) =\big\{ f \in L_p(\Omega): \int_{-1}^1 \dfrac{f}{\sqrt{1-x^2}} dx =0\big\} .$$ 
\item  For $p=2$, $\mathscr{R}(H)$ is dense and proper.  
\end{enumerate}
\end{theorem}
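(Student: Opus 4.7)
}
The plan rests on three ingredients: (i) M.~Riesz's theorem, which gives boundedness of $H:L_p(-1,1)\to L_p(-1,1)$ for all $p\in(1,\infty)$; (ii) the explicit Tricomi inversion formula displayed in the Appendix; and (iii) the bilinear duality $\langle Hf,g\rangle=-\langle f,Hg\rangle$, i.e.\ $H^{\ast}=-H$ with respect to the usual $L_p$--$L_{p'}$ pairing. The observation linking all three is that the weight $w(x):=(1-x^2)^{-1/2}$ lies in $L_p(-1,1)$ exactly when $p<2$, and a direct computation (substitution $y=\cos\theta$) shows $Hw=0$. Hence $\mathrm{span}(w)\subset\ker H$ when $p<2$ and $\ker H=\{0\}$ when $p\ge 2$.

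For part (1), $p\in(1,2)$, I would realise Tricomi's formula as an explicit pseudo-inverse
\[
(H^{\#} g)(x):=-\frac{1}{\pi\sqrt{1-x^2}}\,\mathrm{PV}\!\int_{-1}^{1}\frac{\sqrt{1-y^2}\,g(y)}{y-x}\,dy,
\]
and apply the algebraic identity from the Appendix to split $H^{\#} g = Hg + Rg$, where $R$ has kernel $(x+y)\bigl[\sqrt{1-x^2}\bigl(\sqrt{1-x^2}+\sqrt{1-y^2}\bigr)\bigr]^{-1}$. The first summand is handled by M.~Riesz; for the second I would use Schur's test, crucially exploiting $(1-x^2)^{-1/2}\in L_p$ for $p<2$, to obtain $\|H^{\#} g\|_p\le C_p\|g\|_p$. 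Direct substitution in the Tricomi formula yields $HH^{\#}=I$ on $L_p$, while $H^{\#} H f = f - C(f)w$ for a continuous linear functional $C$. This gives $\ker H=\mathrm{span}(w)$, $\mathrm{Ran}(H)=L_p$, and $H$ Fredholm of index $+1$ with bounded pseudo-inverse $H^{\#}$.

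For part (2), $p\in(2,\infty)$, I would pass to the dual index $p'\in(1,2)$ and exploit $H^{\ast}=-H$: any annihilator of $\mathrm{Ran}(H|_{L_p})$ lies in $\ker(H|_{L_{p'}})=\mathrm{span}(w)$ by part (1), and since $w\in L_{p'}$ the functional $f\mapsto\int_{-1}^{1}f(x)(1-x^2)^{-1/2}\,dx$ is continuous on $L_p$. Closedness of $\mathrm{Ran}(H)$ comes from dualising $HH^{\#}=I$ on $L_{p'}$: the operator $-(H^{\#})^{\ast}:L_p\to L_p$ is a bounded left inverse of $H$, giving $\|Hf\|_p\ge c\|f\|_p$ and in particular injectivity (consistent with $w\notin L_p$ for $p>2$). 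Combining these, $\mathrm{Ran}(H)$ coincides with the codimension-one subspace in the statement, and $H$ is Fredholm of index $-1$ with bounded inverse on its range. For part (3), $p=2$: $\ker H=\{0\}$ because $w\notin L_2$, and anti-self-adjointness gives $\overline{\mathrm{Ran}(H)}=(\ker H^{\ast})^{\perp}=L_2$, hence density. Properness follows because if $\mathrm{Ran}(H)=L_2$, the open mapping theorem would force $0\notin\sigma(H)$, contradicting the classical Koppelman--Pincus identity $\sigma(iH|_{L_2})=[-1,1]$ which places $0$ in the continuous spectrum.

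The principal obstacle is the $L_p$-boundedness of the weighted piece $R$ of $H^{\#}$ for $p\in(1,2)$: its kernel carries the endpoint weight $(1-x^2)^{-1/2}$, and one must verify (by Schur's test, or equivalently by an $A_p$-weighted Hilbert transform inequality with weight $(1-x^2)^{p/2}$ on $(-1,1)$) that the estimate survives up to but not beyond the critical threshold $p=2$. Once this boundedness is secured, the index computations, the range characterisation in part (2), and the density/properness dichotomy in part (3) follow from the duality argument essentially mechanically.
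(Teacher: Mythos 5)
Your proposal is correct in outline, but note that the paper itself does not prove Theorem \ref{invhilb}: it is quoted from the literature (Okada--Elliott \cite{FH2}, together with \cite{FH1,FH} and Tricomi \cite{tricomi}), and the Appendix only records the inversion formula and the algebraic kernel identity. What you have written is essentially a reconstruction of that classical Söhngen--Tricomi--Widom argument: $Hw=0$ for $w=(1-x^2)^{-1/2}$, M.~Riesz boundedness, the explicit pseudo-inverse $H^{\#}$, and the duality $H^{\ast}=-H$ to transfer the $p<2$ results to $p>2$ and $p=2$. This is exactly the toolkit the paper assembles before citing the theorem, so your route is the ``expected'' proof; what it buys over the paper's treatment is self-containedness, at the cost of having to verify the composition identities and the weighted estimate yourself.

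Three points deserve attention if you carry this out. First, ``direct substitution in the Tricomi formula yields $HH^{\#}=I$'' (and likewise $H^{\#}Hf=f-C(f)w$) conceals the real work: interchanging two principal-value integrations requires the Poincar\'e--Bertrand formula (or Tricomi's bilinear identity), and one must check it applies a.e.\ for arbitrary $g\in L_p$, $p\in(1,2)$; this is the heart of surjectivity, not a formality. Second, the weight in your $A_p$ remark has the wrong sign of exponent: boundedness of $H^{\#}$ on $L_p$ is equivalent to boundedness of $H$ on $L_p\bigl((1-x^2)^{-p/2}\,dx\bigr)$, and $(1-x^2)^{-p/2}$ is an $A_p$ weight precisely when $p<2$ (your stated threshold is right, the displayed weight $(1-x^2)^{p/2}$ is not). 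Third, for $p=2$: injectivity needs the nesting $L_2\subset L_{p_0}$, $p_0<2$, plus the kernel identification from part (1) (``$w\notin L_2$'' alone is not an argument, and your left-inverse trick is unavailable at $p=2$ since $H^{\#}$ is unbounded there); and properness of the range via the Koppelman--Pincus spectral theorem is legitimate but imports another nontrivial result---alternatively it follows from the explicit range description in \cite{FH2}, or by exhibiting $g\in L_2$ whose solution set $\{H^{\#}g+Cw\}$ misses $L_2$. With these repairs the argument is sound and matches the cited sources.
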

%\subsection{Mikhlin Theorem}\label{mikhlintheorem}
%In this section we recall the notion of Mikhlin multipliers and the well-known conditions for $L_p(\reals)$ boundedness of operators.

%First, a {Mikhlin multiplier} is a function $\mu(\eta)$ such that $$\hat g (\eta) = \mu (\eta) \hat f(\eta),~~ \eta \in \reals ~~\text{(possibly excluding $0$}),$$ is the Fourier transform of a function in $L_p(\reals)$ {\em given} that $\hat f$ is the Fourier transform of an element of $L_p(\reals)$. If we denote $T$ as the operator corresponding to the multiplier $m(\eta)$, i.e. $$\hat g(\eta) = m(\eta) \hat f(\eta)~~ \leftrightarrow ~~g=Tf,$$ then we have the classic result \cite{mikhlin} $$ ||T||_{\mathscr{L}\left(L_p(\reals)\right)} \le cM_p,$$ where $M_p$ is a constant depending only on $p \in (1,\infty)$. 

%A sufficient condition is given in the theorem below (also due to Mikhlin \cite{mikhlin}).
%\begin{theorem}
%Suppose that $\mu(\eta)$ is $C^1(\reals)$ (perhaps omitting $\eta =0$), and $$|\mu(\eta)| + |\eta||\mu'(\eta)| \le c < \infty, ~\eta \in \reals,$$ then $\mu(\eta)$ is a Mikhlin multiplier on $L_p(\reals)$, and the corresponding operator has the above bound as an element of $\mathscr{L}\left(L_p(\reals)\right).$
%\end{theorem}

\end{document}